\documentclass{amsart}
\usepackage{amssymb}
\usepackage{enumerate}
\usepackage{bbm}
\usepackage[final]{hyperref}
\usepackage{microtype}

\parindent 20pt	
\parskip 1.5ex
\linespread{1.08}

\newtheorem{theorem}{Theorem}
\newtheorem{corollary}[theorem]{Corollary}
\newtheorem{proposition}[theorem]{Proposition}
\newtheorem{lemma}{Lemma}

\theoremstyle{definition}
\newtheorem{definition}{Definition}

\theoremstyle{remark}

\newcommand{\eps}		{\varepsilon}
\newcommand{\R}		{\mathbbm{R}}

\newcommand{\A}		{\mathcal{A}}
\newcommand{\B}		{\mathcal{B}}
\newcommand{\G}		{\mathcal{G}}
\newcommand{\U}		{\mathcal{U}}
\renewcommand{\S}		{\mathcal{S}}
\renewcommand{\P}		{\mathfrak{A}}

\newcommand{\norm}[1]	{\lVert#1\rVert}

\renewcommand{\exp}[1]	{\operatorname{exp}\left( #1 \right)}

\renewcommand{\Pr}[2][]	{\mathbbm{P}_{#1}\left( #2 \right)}
\newcommand{\Ex}[1]	{\mathbbm{E}\left( #1 \right)}
\newcommand{\E}		{\mathbbm{E}}
\newcommand{\ind}[1]	{\mathbbm{1}_{#1}}

\newcommand{\lam}		{\lambda_{\ast}}
\newcommand{\K}		{\kappa}
\newcommand{\notcx}	{\nleftrightarrow}

\DeclareMathOperator{\Bin}{binomial}
\DeclareMathOperator{\essinf}{ess\,inf}
\DeclareMathOperator{\diam}{diam}

\begin{document}

\title{Connectivity of Inhomogeneous Random Graphs}

\author{Luc Devroye}
\address{School of Computer Science\\ McGill University\\ Montreal, Canada H3A 2K6.}
\email{luc@cs.mcgill.ca}
\author{Nicolas Fraiman}
\address{Department of Mathematics and Statistics\\ McGill University\\ Montreal, Canada H3A 2K6.}
\email{fraiman@math.mcgill.ca}

\subjclass[2010]{60C05, 05C80}
\keywords{random graphs, connectivity threshold}
\thanks{The research of both authors was sponsored by NSERC Grant A3456}

\date{\today}

\maketitle

\begin{abstract}
We find conditions for the connectivity of inhomogeneous random graphs with intermediate density. Our results generalize the classical result for $G(n,p)$, when $p = c\log n/n$. We draw $n$ independent points $X_i$ from a general distribution on a separable metric space, and let their indices form the vertex set of a graph. An edge $(i,j)$ is added with probability $\min (1, \K(X_i, X_j) \log n / n)$, where $\K \geq 0$ is a fixed kernel. We show that, under reasonably weak assumptions, the connectivity threshold of the model can be determined.
\end{abstract}

\section{Introduction}\label{sec:intro}

We study the connectivity of inhomogeneous random graphs, where edges are present independently but with unequal edge occupation probabilities. A discrete version of the model was introduced by S{\"o}derberg \cite{Sod02}.
The sparse case (when the number of edges is linear in  the number $n$ of vertices) was studied in substantial detail in the seminal paper by Bollob\'as, Janson and Riordan \cite{BJR07}, where various results have been proved, including the critical value for the emergence of a giant component, and bounds on the connected component sizes in the super and subcritical regimes.
The dense case (when the number of edges is quadratic in $n$) has developed into a deep and beautiful theory of graph limits started by Lov\'asz and Szegedy \cite{LS06} and further studied in depth by Borgs, Chayes, Lov\'asz, S\'os and Vesztergombi \cite{BCLSV08,BCLSV12} and by Bollob\'as, Borgs, Chayes and Riordan \cite{BBCR10} among others.

Models with intermediate density (a number of edges that is more than linear but less than quadratic in $n$) can be obtained by defining the edge probabilities with a different scaling. Although there are connections to the other cases they lead to very different properties. The intermediate density case has not received much attention but it is of particular interest since it is the natural setting to study the transition for connectivity and other related properties.

\subsection{The model} 
In this paper we follow the notation from \cite{BJR07} with some minor changes. We also use the following standard notation: we write $(\:\cdot\:)_+$ for the positive part, $f =O(g)$ if $f/g$ is bounded and $f = o(g)$ if $f/g\to 0$. We say that a sequence of events holds \emph{with high probability}, if it holds with probability tending to $1$ as $n\to\infty$.

Let $\S$ be a separable metric space and $\mu$ a Borel probability measure on $\S$. Let $X_1,\dots,X_n$ be $\mu$-distributed independent random variables on $\S$. In what follows, $X$ denotes another variable independent of $X_1,\dots,X_n$ with the same distribution. Let $\K:\S\times\S \to \R^+$ a non-negative symmetric integrable kernel, $\K \geq 0$ and $\K\in L^1(\S\times\S, \mu\otimes\mu)$.

\begin{definition}
The \emph{(intermediate) inhomogeneous random graph} with kernel $\K$ is the random graph $G(n,\K) = (V_n,E_n)$ where the vertex set is $V_n = \{1,\dots, n\}$ and we connect each pair of vertices $i,j\in V_n$ independently with probability $p_{ij} = \min\{1, \K(X_i,X_j)p_n\}$ where $p_n = \log n / n$.
\end{definition}

\begin{definition}
Let
\[
\lambda(x) = \int_\S \K(x,y) d\mu(y) \qquad\text{and}\qquad
\lambda_2(x) = \left(\int_\S \K(x,y)^2 d\mu(y)\right)^{1/2}.
\]
We call $\lam = \essinf \lambda(x)$ the \emph{isolation parameter}.
\end{definition}

\begin{definition}
A kernel $\K$ on $(\S,\mu)$ is \emph{reducible} if there exists a set $\A \subset \S$ with $0 < \mu(\A) < 1$ such that $\K = 0$ almost everywhere on $\A \times \A^c$. Otherwise $\K$ is \emph{irreducible}.
\end{definition}
If $\K$ is reducible then we cannot expect the whole graph $G(n,\K)$ to be connected since almost surely there are no edges between the sets $A = \{i: X_i\in \A\}$ and $A^c$. Hence, we shall restrict our attention to the irreducible case.

\subsection{Results} 
The main result we prove is a generalization of the classical result of Erd\H{o}s and Renyi \cite{ER59},\cite{ER60} for $G(n,p)$ stated below.

\begin{theorem}\label{thm:main}
If $\K$ is irreducible, continuous $(\mu\otimes\mu)$-almost everywhere and $\lambda_2 \in L^\infty(\S,\mu)$ then
\[
\lim_{n\to \infty} \Pr{G(n,\K) \text{ is connected}} = \begin{cases}
0 & \text{if }\; \lam < 1, \\
1 & \text{if }\; \lam > 1.
\end{cases}
\]
\end{theorem}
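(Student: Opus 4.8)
The plan is to treat the two regimes separately. The heuristic is the one familiar from $G(n,p)$ with $p=c\log n/n$: a vertex $i$ with $X_i=x$ has expected degree $\approx\lambda(x)\log n$, hence is isolated with probability $\approx n^{-\lambda(x)}$, so the expected number of isolated vertices behaves like $\int_\S n^{1-\lambda(x)}\,d\mu(x)$ --- diverging when $\lam<1$, vanishing when $\lam>1$ --- and when $\lam>1$ it is exactly the absence of small components that yields connectivity.

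\textbf{The regime $\lam<1$.} Let $N_0$ be the number of isolated vertices. Conditioning on $X_i=x$ gives $\Pr{i\text{ isolated}\mid X_i=x}=(1-p_n\mu_n(x))^{n-1}$ with $\mu_n(x):=\Ex{\K(x,X)\wedge p_n^{-1}}\le\lambda(x)$. As $\essinf\lambda<1$, there are $\delta>0$ and a set of positive $\mu$-measure on which $\lambda\le 1-\delta$; there $(1-p_n\mu_n(x))^{n-1}\ge n^{-1+\delta/2}$ for large $n$, so $\E N_0\to\infty$. A second-moment estimate then gives $\Pr{1\text{ and }2\text{ isolated}}\le(1+o(1))\Pr{1\text{ isolated}}^2$: expanding the product over the common neighbours, the only delicate term is $p_n^2\Ex{\K(X_1,X)\K(X_2,X)}\le p_n^2\lambda_2(X_1)\lambda_2(X_2)\le p_n^2\norm{\lambda_2}_\infty^2$ --- this is where $\lambda_2\in L^\infty$ enters --- and raised to the power $n-2$ it costs only a factor $\exp{O((\log n)^2/n)}=1+o(1)$. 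Chebyshev's inequality then forces $N_0\ge 1$ with high probability, so $G(n,\K)$ is disconnected.

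\textbf{The regime $\lam>1$.} Since adding edges only helps, I would first pass to the bounded kernel $\K\wedge C$: from $\lambda(x)-\Ex{\K(x,X)\wedge C}=\Ex{(\K(x,X)-C)_+}\le\Ex{\K(x,X)^2}/C\le\norm{\lambda_2}_\infty^2/C$ --- using $\lambda_2\in L^\infty$ again --- the truncated kernel still has isolation parameter above $1$ for $C$ large, is still irreducible, and $G(n,\K\wedge C)$ couples as a subgraph of $G(n,\K)$; so one may assume $\K\le C$. I would then show, with high probability: \emph{(i)} no isolated vertices, since $\Pr{i\text{ isolated}\mid X_i=x}=(1-p_n\lambda(x))^{n-1}\le n^{-\lam+o(1)}$, hence $\E N_0\le n^{1-\lam+o(1)}\to 0$; \emph{(ii)} no component of size $k$ for $2\le k\le\eps n$ with $\eps$ a small constant, because $\Pr{\text{no edge from }S\text{ to }S^c\mid X}\le\exp{-p_n\sum_{i\in S,\,j\notin S}\K(X_i,X_j)}$, and Hoeffding's inequality with a union bound over the $n$ vertices shows every weighted degree $\sum_{j\ne i}\K(X_i,X_j)$ exceeds $n\lam(1-o(1))$, so the cut weight is at least $k\bigl(n\lam(1-o(1))-Ck\bigr)\ge kn(\lam-C\eps-o(1))$; picking $\eps<(\lam-1)/(2C)$ makes this at least $(1+\delta)kn$ for some $\delta>0$, so $\binom{n}{k}n^{-(1+\delta)k}$ is summable in $k\ge 2$ and tends to $0$; \emph{(iii)} a component of size at least $(1-\eps)n$, obtained from the theory of Bollob\'as, Janson and Riordan \cite{BJR07} --- applicable precisely because $\K$ is bounded and continuous $(\mu\otimes\mu)$-almost everywhere --- applied to the sparser graph with kernel $c\K$ and the standard $1/n$ scaling (a subgraph of $G(n,\K)$ once $c\le\log n$): its giant component has $(\rho(c\K)+o(1))n$ vertices, and a short monotonicity argument using irreducibility shows $\rho(c\K)\to 1$ as $c\to\infty$, so a suitable constant $c$ works. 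On the intersection of \emph{(i)}--\emph{(iii)}, any component other than the giant has at most $\eps n$ vertices and is not a singleton, contradicting \emph{(ii)}; hence $G(n,\K)$ is connected.

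\textbf{The main obstacle.} The crux is step \emph{(iii)} and where it meets step \emph{(ii)}: one must rule out \emph{mesoscopic} separating sets --- of size between a fixed fraction of $n$ and $n/2$ --- and this is exactly the place where the \emph{global} assumption of irreducibility, not the pointwise bound $\lambda\ge\lam$, is indispensable. Importing the macroscopic component structure from \cite{BJR07} dispatches this cleanly, but one must ensure the weighted-degree bound of \emph{(ii)}, with its $-Ck$ correction, stays strong enough all the way up to size $\eps n$, so that the two ranges of component sizes actually overlap. A self-contained alternative would be to prove directly that $\inf\{\int_{\A\times\A^c}\K\,d(\mu\otimes\mu):\eps\le\mu(\A)\le 1-\eps\}>0$ using compactness of the integral operator together with irreducibility, and then transfer this bound to the empirical kernel uniformly over all linear-sized cuts; controlling that supremum over exponentially many sets is the genuinely difficult part of such a route.
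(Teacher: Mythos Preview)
Your argument is correct. For $\lam<1$ it is essentially the paper's second-moment computation (the paper actually proves this half under the weaker hypothesis $\lambda_2\in L^2$ via a ``good set'' truncation, but under the $L^\infty$ assumption of the theorem your direct bound on the cross term $p_n^2\lambda_2(X_1)\lambda_2(X_2)$ is cleaner and suffices).

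For $\lam>1$ the two proofs diverge exactly at the point you identify as the main obstacle. The paper does not truncate the kernel and does not invoke \cite{BJR07}. It first shows, via a Laplace-transform lemma for $k=o(n/\log n)$ and a Bernstein-type concentration lemma for the remaining range up to $\delta n$, that every component has size at least $\delta n$; then it handles linear-sized cuts by a self-contained discretization. It builds a finite ``partition graph'' $H_m$ on the atoms of a fine partition of $\S$, proves from irreducibility and a.e.\ continuity of $\K$ that $H_m$ has a connected component covering all but $\eps$ of the mass, and observes that any two vertex sets of size $\ge\delta n$ must, by pigeonhole on this \emph{fixed} partition, populate adjacent atoms of $H_m$; the resulting $e^{-\Omega(n\log n)}$ bound on the probability of no edge between them absorbs the final $2^n$ union bound over candidate components. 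This is precisely the ``self-contained alternative'' you sketch in your last paragraph, but the paper sidesteps the uniform-over-cuts difficulty you anticipate by reducing to a single pair of atoms rather than controlling a supremum over exponentially many sets. Your route --- truncate to $\K\wedge C$, kill components of size up to $\eps n$ via a uniform Hoeffding bound on weighted degrees, and import a $(1-\eps)n$ giant from \cite{BJR07} applied to the sparser subgraph with kernel $c\K$ --- is valid and arguably slicker, but less self-contained: you pay with the preliminary truncation (needed for both Hoeffding and the graphical condition) and with the appeal to $\rho(c\K)\to1$, whereas the paper's partition argument is elementary and makes the role of irreducibility completely explicit.
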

Note that changing the kernel in a set of $\mu\otimes\mu$ measure zero defines the same graph $G(n,\K)$ almost surely. Therefore what we actually need is that there is a version of $\K$ (i.e., $\tilde{\K}$ such that $\tilde{\K}=\K$ almost everywhere) that is continuous almost everywhere.

The theorem is proved in two parts. In Section \ref{sec:isolated} we prove that when $\lam < 1$ the graph $G(n,\K)$ is disconnected with high probability. We prove this under milder conditions for the kernel $\K$ using the second moment method. In Section \ref{sec:connectivity} we prove that when $\lam > 1$ we have connectivity with high probability. To prove this we start by showing that every component should be at least of linear size using concentration inequalities. Then we use a discretization argument to prove that any two such components must meet.

If $G$ is a group acting transitively on $\S$ with invariant measure $\mu$ and $\K$ is an invariant kernel, we say we are in the \emph{homogeneous} case. We can specialize Theorem \ref{thm:main} for this case. Since there exists $g\in G$ such that $g x = z$ then we have $\lambda(x) = \int_\S \K(g x, g y) d\mu(y) = \int_\S \K(z,w) d\mu(w) = \lambda(z) = \lam$ thus $\lambda(x)$ and $\lambda_2(x)$ are independent of $x\in \S$.
Here $\K\in L^2(\S\times\S,\mu\otimes\mu)$ is enough to guarantee that $\lambda_2 \in L^\infty(\S,\mu)$. Therefore we have the following

\begin{corollary}
If $\K\in L^2(\S\times\S,\mu\otimes\mu)$ is homogeneous, irreducible and continuous $(\mu\otimes\mu)$-almost everywhere then
\[
\lim_{n\to \infty} \Pr{G(n,\K) \text{ is connected}} = \begin{cases}
0 & \text{if }\; \lam < 1, \\
1 & \text{if }\; \lam > 1.
\end{cases}
\]
\end{corollary}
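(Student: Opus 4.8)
The plan is to derive the corollary as an immediate consequence of Theorem~\ref{thm:main}. Among the hypotheses of that theorem, irreducibility and $(\mu\otimes\mu)$-almost-everywhere continuity of $\K$ are assumed verbatim in the corollary, so the only thing left to establish is that, in the homogeneous setting, the assumption $\K\in L^2(\S\times\S,\mu\otimes\mu)$ forces $\lambda_2\in L^\infty(\S,\mu)$. Once this is checked, Theorem~\ref{thm:main} applies directly and gives the stated zero--one law, with $\lam=\essinf_x\lambda(x)=\lambda(x_0)$ for any fixed $x_0\in\S$ (the essential infimum being taken over a constant function, by homogeneity).

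For the reduction, recall the computation already recorded above: if $G$ acts transitively on $\S$ with invariant measure $\mu$ and $\K$ is $G$-invariant, then for any $x,z\in\S$ there is $g\in G$ with $gx=z$, whence $\lambda_2(x)^2=\int_\S\K(gx,gy)^2\,d\mu(y)=\int_\S\K(z,w)^2\,d\mu(w)=\lambda_2(z)^2$, so $\lambda_2$ is constant, say $\lambda_2\equiv c\in[0,\infty]$. Since $\K\geq 0$, Tonelli's theorem gives
\[
\int_\S \lambda_2(x)^2\,d\mu(x)=\int_\S\int_\S \K(x,y)^2\,d\mu(y)\,d\mu(x)=\norm{\K}_{L^2}^2<\infty .
\]
As $\mu$ is a probability measure and the integrand on the left equals the constant $c^2$, we conclude $c^2=\norm{\K}_{L^2}^2<\infty$, hence $c<\infty$ and therefore $\lambda_2\equiv c$ lies in $L^\infty(\S,\mu)$, as required.

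I expect no real obstacle here beyond one point of bookkeeping that should be handled carefully: the constancy identity for $\lambda_2$ implicitly presupposes that the inner integral $\int_\S\K(x,y)^2\,d\mu(y)$ is finite for the relevant $x$ (so that the substitution $y\mapsto gy$ is legitimate and both sides denote genuine numbers), and finiteness for $\mu$-almost every $x$ is precisely what $\K\in L^2$ yields through Tonelli. In the non-homogeneous case the pointwise bound $\lambda_2(x)\le\norm{\lambda_2}_\infty$ can fail even when $\K\in L^2$, which is why Theorem~\ref{thm:main} carries the $L^\infty$ hypothesis separately; the content of the corollary is just that the symmetry of the homogeneous model automatically upgrades $L^2$ control of $\K$ to $L^\infty$ control of $\lambda_2$.
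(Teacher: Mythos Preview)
Your proposal is correct and follows exactly the paper's own reasoning: the paper notes (just before stating the corollary) that homogeneity makes $\lambda$ and $\lambda_2$ constant, whence $\K\in L^2(\S\times\S,\mu\otimes\mu)$ suffices to put $\lambda_2$ in $L^\infty(\S,\mu)$, and then Theorem~\ref{thm:main} applies. Your write-up is in fact more careful than the paper's, spelling out the Tonelli step and the constancy computation that the paper merely asserts.
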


The Erd\H{o}s-Renyi random graph and the random bipartite graph are both particular cases in which $\S$ has only one or two points respectively. Another example is given by taking $\S = [0,1)$ with Lebesgue measure $\mu$, and $\K(x, y) = h(x - y)$ for a periodic even function. In general, we can take $\K(x,y) = f(d(x,y))$ where $d$ is an invariant metric with corresponding Haar measure $\mu$. However, the random geometric graph introduced by Gilbert \cite{Gil61} whose connectivity threshold was determined by Penrose \cite{Pen97} (and other properties were studied in depth in the monograph \cite{Pen03}) is not included in this Corollary because it cannot be represented with a fixed $\K$ in $L^2$.

\section{Occurrence of isolated vertices}\label{sec:isolated}

In this Section we prove that the graph is disconnected with high probability when $\lam < 1$. We prove it by showing that in this case with high probability isolated vertices are going to exist on the graph. The technique is based on the second moment method.

\begin{theorem}
If $\lambda_2 \in L^2(\S,\mu)$ and $\lam < 1$ then $G(n,\K)$ is disconnected with high probability.
\end{theorem}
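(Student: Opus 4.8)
The plan is to apply the second moment method to the number of isolated vertices, localized to a region where $\lambda$ is close to its essential infimum. Since $\lam < 1$, fix $\eps > 0$ with $\lam + \eps < 1$ and put $\A_\eps = \{x \in \S : \lambda(x) < \lam + \eps\}$; by the definition of the essential infimum, $a := \mu(\A_\eps) > 0$. Write $p_n = \log n / n$ and, for $x \in \S$,
\[
q(x) = \Ex{1 - \min\{1, \K(x,X)\,p_n\}},
\]
so that, conditionally on $X_i = x$, vertex $i$ is isolated with probability exactly $q(x)^{n-1}$. Let $N = \sum_{i=1}^n \ind{i \text{ isolated}}\,\ind{X_i \in \A_\eps}$; it suffices to show $\Pr{N \geq 1} \to 1$, since on $\{N \geq 1\}$ the graph has an isolated vertex and is therefore disconnected.

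First I would estimate $\Ex{N}$. From $\min\{1,t\} \le t$ we get $q(x) \ge 1 - \lambda(x) p_n \ge 1 - (\lam+\eps)p_n$ for every $x \in \A_\eps$, which is positive for $n$ large; using $\log(1-t) \ge -t/(1-t)$ one finds $q(x)^{n-1} \ge n^{-(\lam+\eps)(1+o(1))}$ uniformly over $x \in \A_\eps$, whence
\[
\Ex{N} = n\,\Ex{\ind{X\in\A_\eps}\,q(X)^{n-1}} \ge a\,n^{1-(\lam+\eps)(1+o(1))} \longrightarrow \infty .
\]

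The bulk of the work is the second moment. Since $\Ex{N^2} = \Ex{N} + n(n-1)\,\Pr{1,2 \text{ isolated},\ X_1, X_2 \in \A_\eps}$ and $\Ex{N} = o(\Ex{N}^2)$, it is enough to control the pair term. Conditioning on $X_1 = x$, $X_2 = y$, the probability that $1$ and $2$ are both isolated equals $(1 - \min\{1,\K(x,y)p_n\})\,r(x,y)^{n-2}$, where $r(x,y) = \Ex{(1-\min\{1,\K(x,X)p_n\})(1-\min\{1,\K(y,X)p_n\})}$; expanding the product and applying Cauchy--Schwarz gives the key bound
\[
r(x,y) \le q(x)q(y) + p_n^2\,\Ex{\K(x,X)\K(y,X)} \le q(x)q(y) + p_n^2\,\lambda_2(x)\,\lambda_2(y).
\]
Now fix a truncation level $M_n \to \infty$, to be chosen. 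On $\{\lambda_2(X_1) \le M_n,\ \lambda_2(X_2) \le M_n\}$, using $q(x)q(y) \ge \tfrac12$ on $\A_\eps$ (valid for large $n$), we get $r(x,y)^{n-2} \le (q(x)q(y))^{n-2}\exp{2 M_n^2 p_n^2 n}$, and this exponential tends to $1$ as long as $M_n^2(\log n)^2 = o(n)$; together with the elementary fact $q(X)^{-1} = 1+o(1)$ uniformly on $\A_\eps$, this makes the corresponding part of $n(n-1)\Pr{\cdots}$ at most $(1+o(1))\big(n\,\Ex{\ind{X\in\A_\eps}q(X)^{n-1}}\big)^2 = (1+o(1))\Ex{N}^2$. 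On the complementary event $\{\lambda_2(X_2) > M_n\}$ (and symmetrically for $X_1$) I would simply use $r(x,y) \le q(x)$, so that this part is at most
\[
n(n-1)\,\Ex{\ind{X_1\in\A_\eps}q(X_1)^{n-2}}\cdot\mu(\{\lambda_2 > M_n\}) = O\!\big(n\,\mu(\{\lambda_2 > M_n\})\,\Ex{N}\big),
\]
using the first-moment estimate once more; since $\lambda_2 \in L^2(\S,\mu)$, Chebyshev gives $\mu(\{\lambda_2 > M_n\}) \le \Ex{\lambda_2(X)^2}/M_n^2$, so this term is $o(\Ex{N}^2)$ as soon as $M_n^2$ is of larger polynomial order than $n^{\lam+\eps}$.

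The only genuine obstacle is that $M_n$ must meet both demands at once: $M_n^2 \ll n/(\log n)^2$ and $M_n^2 \gg n^{\lam+\eps}$. Because $\lam + \eps < 1$ there is room — any $M_n = n^{\gamma}$ with $(\lam+\eps)/2 < \gamma < 1/2$ works — and this is precisely where the $L^2$ hypothesis on $\lambda_2$ enters. With such a choice, $\Ex{N^2} \le (1+o(1))\Ex{N}^2$, and since $\Ex{N} \to \infty$, the second moment method yields $\Pr{N \ge 1} \ge \Ex{N}^2/\Ex{N^2} \to 1$. Hence $G(n,\K)$ contains an isolated vertex, and is disconnected, with high probability.
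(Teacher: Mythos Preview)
Your proof is correct and follows essentially the same route as the paper: second moment method for isolated vertices localized to the set where $\lambda$ is below $1$, combined with a truncation of $\lambda_2$ to control the pair term. The differences are only at the level of the computations. You obtain the key second-moment inequality via the covariance identity $r(x,y)=q(x)q(y)+\operatorname{Cov}(\cdot,\cdot)\le q(x)q(y)+p_n^2\lambda_2(x)\lambda_2(y)$, whereas the paper reaches an equivalent bound through the Taylor inequality $(1-t)_+\le e^{-t}\le 1-t+t^2/2$ and then compares against $\int_\B\int_\B(1-(\lambda(x)+\lambda(y))p_n)^{n-1}$ rather than directly against $(\int q^{n-1})^2$. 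Your covariance argument is arguably a touch cleaner. The paper also commits up front to the single truncation level $\sqrt{n}/\log^2 n$, which works for every $\eps>0$, while you leave $M_n=n^\gamma$ flexible and note that any $\gamma\in((\lam+\eps)/2,\,1/2)$ suffices; both choices exploit exactly the same window and the same Chebyshev bound coming from $\lambda_2\in L^2$.
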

\begin{proof}
Let $N$ be the number of isolated vertices. We can write $N = \sum_{i=1}^n I_i$ where $I_i$ is the indicator that vertex $i$ is isolated. Since $\lam < 1$ there exists $\eps > 0$ such that the set $\B = \{x \in \S: \lambda(x) < 1-\eps \}$ has measure $\mu(\B) > 0$. We are focusing only on the points that lie in $\B$. Define $N_\B = \sum_{i=1}^n Y_i$ where $Y_i$ is the indicator that vertex $i$ is isolated and $X_i \in \B$. Clearly $N \geq N_\B$. We show that $\lim_{n\to\infty} \Pr{N_\B > 0} = 1$ using the second moment method.
By the Cauchy--Schwarz inequality we have that
\[
\Pr{N_\B > 0} \geq \frac{\Ex{N_\B}^2}{\Ex{N_\B^2}}.
\]
Since $\Ex{N_\B} = n \Ex{Y_1}$ and $\Ex{N_\B^2} = \Ex{N_\B} + n(n-1)\Ex{Y_1Y_2}$, we are done if
\[
\lim_{n\to\infty} n\Ex{Y_1} = \infty \qquad\text{and}\qquad
\limsup_{n\to\infty} \frac{\Ex{Y_1Y_2}}{\Ex{Y_1}\Ex{Y_2}} \leq 1.
\]
For the first limit consider
\begin{align}
\Ex{Y_1} &= \Ex{\ind{[X_1\in \B]}\prod_{j=2}^n \ind{[(1,j)\notin E_n]}} \nonumber \\
&= \int_\B \prod_{j=2}^n \Ex{\big(1-\K(X_j,x)p_n\big)_+} d\mu(x) \nonumber \\
&= \int_\B \Ex{\big(1-\K(X,x)p_n\big)_+}^{n-1} d\mu(x) \nonumber \\
&\geq \int_\B (1-\lambda(x)p_n)^{n-1} d\mu(x) \label{eq:expected} \\
&\geq (1-(1-\eps)p_n)^{n-1}\mu(\B). \nonumber
\end{align}
Therefore,
\begin{align*}
\lim_{n\to\infty} n\Ex{Y_1}
&\geq \lim_{n\to\infty} n(1-(1-\eps)p_n)^{n-1}\mu(\B)  \\
&= \lim_{n\to\infty} n e^{-(1-\eps)np_n}\mu(\B) \\
&= \lim_{n\to\infty} n^{\eps}\mu(\B) = \infty.
\end{align*}
The proof is completed with the next Lemma.
\end{proof}

\begin{lemma}\label{lem:2ndmoment}
If $\lambda_2 \in L^2(\S,\mu)$ then $\Ex{Y_1 Y_2} \leq (1+o(1)) \Ex{Y_1}\Ex{Y_2}$.
\end{lemma}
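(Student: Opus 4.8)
The plan is to compute $\Ex{Y_1 Y_2}$ explicitly by conditioning on the locations $X_1 = x$, $X_2 = y$ of the two vertices, and then compare the resulting expression with $\Ex{Y_1}\Ex{Y_2}$. Given $X_1 = x$ and $X_2 = y$ with $x, y \in \B$, the events ``vertex $1$ is isolated'' and ``vertex $2$ is isolated'' require: first, that the edge $(1,2)$ is absent, contributing a factor $\big(1 - \K(x,y)p_n\big)_+$; and second, that for each $j \in \{3,\dots,n\}$ the vertex $j$ is joined to neither $1$ nor $2$. Since the edges $(1,j)$ and $(2,j)$ are conditionally independent given $X_j$, the latter contributes $\Ex{\big(1-\K(X,x)p_n\big)_+\big(1-\K(X,y)p_n\big)_+}^{n-2}$. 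Writing $g(x) = \Ex{\big(1-\K(X,x)p_n\big)_+}$ for the single-vertex factor appearing in \eqref{eq:expected}, so that $\Ex{Y_i} = \int_\B g(x)^{n-1}\,d\mu(x)$, I would bound the pairwise integrand from above by dropping the edge-$(1,2)$ factor (it is at most $1$) and estimating the correlation term.

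The key step is to show that the two-point factor is not much larger than the product of the one-point factors: precisely, $\Ex{\big(1-\K(X,x)p_n\big)_+\big(1-\K(X,y)p_n\big)_+} \le g(x)\,g(y)\,\big(1 + r_n(x,y)\big)$ where the remainder comes from the covariance $\mathrm{Cov}\big(\big(1-\K(X,x)p_n\big)_+,\big(1-\K(X,y)p_n\big)_+\big)$. Using $(1-a)_+(1-b)_+ \le (1-a)_+ + (1-b)_+ - 1 + ab$ type inequalities, or more simply expanding $\big(1-\K(X,x)p_n\big)_+\big(1-\K(X,y)p_n\big)_+ \le 1 - \K(X,x)p_n - \K(X,y)p_n + \K(X,x)\K(X,y)p_n^2$ pointwise, one gets that the covariance is at most $p_n^2\,\Ex{\K(X,x)\K(X,y)} \le p_n^2\,\lambda_2(x)\lambda_2(y)$ by Cauchy--Schwarz in the $X$ variable. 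Since $g(x) \ge 1 - \lambda(x)p_n \ge 1 - o(1)$ uniformly, this yields
\[
\frac{\Ex{\big(1-\K(X,x)p_n\big)_+\big(1-\K(X,y)p_n\big)_+}}{g(x)g(y)} \le 1 + \big(1+o(1)\big)\,p_n^2\,\lambda_2(x)\lambda_2(y),
\]
and raising to the power $n-2$ gives a factor at most $\exp{\big(1+o(1)\big)\,np_n^2\,\lambda_2(x)\lambda_2(y)} = \exp{o(1)\cdot \lambda_2(x)\lambda_2(y)}$, since $np_n^2 = (\log n)^2/n \to 0$.

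Putting this together,
\[
\Ex{Y_1 Y_2} \le \int_\B\int_\B g(x)^{n-1}g(y)^{n-1}\,\exp{o(1)\cdot\lambda_2(x)\lambda_2(y)}\,d\mu(x)\,d\mu(y),
\]
and the goal is to see that the exponential factor integrates away to $1+o(1)$ against the probability-like weight $g(x)^{n-1}g(y)^{n-1}\,d\mu(x)\,d\mu(y)$ after normalizing by $\Ex{Y_1}\Ex{Y_2} = \big(\int_\B g(x)^{n-1}d\mu(x)\big)^2$. Here is where the hypothesis $\lambda_2 \in L^2(\S,\mu)$ enters: I would split $\B$ into the region where $\lambda_2$ is bounded by a large constant $M$ — on which $\exp{o(1)\cdot\lambda_2(x)\lambda_2(y)} = 1 + o(1)$ uniformly — and the tail region $\{\lambda_2 > M\}$, and use $e^{o(1)ab} \le 1 + o(1)(a^2+b^2)$ (valid once the $o(1)$ is small) so that the tail contribution is controlled by $\int_{\{\lambda_2 > M\}}\lambda_2(x)^2\,d\mu(x)$, which is small by dominated convergence / absolute continuity of the integral.

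The main obstacle I anticipate is the last step: making the normalization rigorous requires a lower bound on $\Ex{Y_1} = \int_\B g(x)^{n-1}d\mu(x)$ that is compatible with the upper bounds, and since $g(x)^{n-1}$ decays like $n^{-\lambda(x)}$ the ``mass'' of this measure concentrates on $\{\lambda(x)$ close to $\lam\}$; one must check that on that concentrating set $\lambda_2$ is still controlled, which is exactly what $\lambda_2 \in L^2$ buys (together with $\lambda(x) \le \lambda_2(x)$ by Cauchy--Schwarz, so $\lambda$ small forces nothing bad, and the $L^2$ tail bound handles $\lambda_2$ large regardless of where the mass sits). Handling the truncation of the $(\,\cdot\,)_+$ carefully — i.e. the fact that $\big(1-\K p_n\big)_+$ differs from $1 - \K p_n$ only where $\K \ge 1/p_n$, a set of vanishing $\mu$-measure by integrability — is a routine but necessary technicality I would dispatch at the start.
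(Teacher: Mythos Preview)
Your overall architecture---condition on $X_1=x$, $X_2=y$, bound the covariance of the one-step factors by $p_n^2\lambda_2(x)\lambda_2(y)$ via Cauchy--Schwarz, and raise to the $(n-2)$-th power---matches the paper's, and up through the display
\[
\Ex{Y_1 Y_2} \;\le\; \int_\B\int_\B g(x)^{n-1}g(y)^{n-1}\,\exp{c_n\,\lambda_2(x)\lambda_2(y)}\,d\mu(x)\,d\mu(y),
\qquad c_n\sim \frac{(\log n)^2}{n},
\]
your argument is essentially correct. The gap is in the tail step. The inequality you invoke, $e^{c_n ab}\le 1+c_n(a^2+b^2)$ ``once the $o(1)$ is small,'' is false without a bound on $ab$: for any $c_n>0$ the left side blows up as $ab\to\infty$ while the right side grows only quadratically. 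In fact, with only $\lambda_2\in L^2(\mu)$ the double integral $\int_\B\int_\B \exp{c_n\lambda_2(x)\lambda_2(y)}\,d\mu(x)d\mu(y)$ can be \emph{infinite} for every $n$ (take, for instance, $\lambda_2(x)\asymp x^{-1/4}$ on $[0,1]$). So once you have passed to the exponential upper bound, the tail region cannot be controlled by $\norm{\lambda_2}_2$ alone, and a fixed-threshold split $\{\lambda_2\le M\}$ versus $\{\lambda_2>M\}$ cannot close the argument.

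The paper's proof repairs exactly this point by truncating \emph{before} exponentiating, at an $n$-dependent level $\G=\{\lambda_2\le \sqrt{n}/\log^2 n\}$. On $\G\times\G$ one has $(\lambda_2(x)+\lambda_2(y))^2p_n^2\le 4/(n\log^2 n)$, so the $(n-2)$-th power contributes only a genuine $1+o(1)$. On the complement the exponential bound is abandoned entirely: one uses the crude $Y_1Y_2\ind{G^c}\le Y_1\ind{[X_2\notin\G]}+Y_2\ind{[X_1\notin\G]}$ together with the near-factorization $\Ex{Y_i\,f(X_j)}\le \Ex{Y_i}\,\Ex{f(X_j)}$ (drop the edge $(1,2)$), and then Chebyshev with $\lambda_2\in L^2$ gives $\Pr{X\notin\G}\le \norm{\lambda_2}_2^2\log^4 n/n = o(n^{\eps-1})=o(\Ex{Y_1})$. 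This is the missing idea: the tail must be handled at the level of the indicators $Y_i$, not after the exponential relaxation.
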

\begin{proof}
Define the ``good'' set $\G = \{x\in\S: \lambda_2(x) \leq \sqrt{n}/\log^2 n\}$ and let $G$ be the event that both $X_1\in \G$ and $X_2\in \G$. Then,
\[
\Ex{Y_1 Y_2} = \Ex{Y_1 Y_2 \ind{G^c}} + \Ex{Y_1 Y_2 \ind{G}}.
\]
For the first term, note that for $i\neq j$ we have 
\[
\Ex{Y_i f(X_j)} \leq \Ex{\ind{[X_i\in \B]}\prod_{\ell\neq i,j} \ind{[(i,\ell)\notin E_n]} f(X_j)} \leq \Ex{Y_i} \Ex{f(X_j)}.
\]
Therefore,
\begin{align*}
\Ex{Y_1 Y_2 \ind{G^c}}
&\leq \Ex{Y_1 \ind{[X_2\notin \G]}} + \Ex{Y_2 \ind{[X_1\notin \G]}} \\
&\leq \Ex{Y_1} \Pr{X_2\notin \G} + \Ex{Y_2} \Pr{X_1\notin \G}.
\end{align*}
Using Chebyshev's inequality
\[
\Pr{X\notin \G} = \Pr{\lambda_2(X) > \sqrt{n}/\log^2 n} \leq \frac{\norm{\lambda_2}_2^2 \log^4 n}{n} = o(n^{\eps-1}),
\]
since we have $\Ex{\lambda_2(X)^2} = \int_{\S} \lambda(x)^2 d\mu(x) = \norm{\lambda_2}_2^2 < \infty$. Thus, we have that
\[
\Ex{Y_1 Y_2 \ind{G^c}} \leq o(1)\Ex{Y_1}\Ex{Y_2}.
\]
Now for the second term, for $Y_1 Y_2 = 1$ no vertex $i > 2$ can be adjacent to $1$ or $2$ so
\begin{equation}
\Ex{Y_1 Y_2 \ind{G}} \leq \int_\G\int_\G \Ex{\big(1-\K(X,x)p_n\big)_+ \big(1-\K(X,y)p_n\big)_+}^{n-2} d\mu(x)d\mu(y). \label{eq:step1}
\end{equation}
We can bound the integrand by
\begin{align}
&\Ex{\big(1-\K(X,x)p_n\big)_+ \big(1-\K(X,y)p_n\big)_+} \nonumber \\
&\qquad\qquad\leq \Ex{\exp{-\big(\K(X,x)+\K(X,y)\big)p_n \Big.}} \nonumber \\
&\qquad\qquad\leq \Ex{1-\big(\K(X,x)+\K(X,y)\big)p_n + \frac{1}{2}\big(\K(X,x)+\K(X,y)\big)^2 p_n^2 } \nonumber \\
&\qquad\qquad= 1-\big(\lambda(x)+\lambda(y)\big)p_n + \frac{1}{2}\Ex{\big(\K(X,x)+\K(X,y)\big)^2} p_n^2. \label{eq:step2}
\end{align}
Since $\lambda_2(x) = \sqrt{\Ex{\K(X,x)^2}}$, by the Cauchy--Schwarz inequality, we have
\begin{align}
\Ex{\big(\K(X,x)+\K(X,y)\big)^2}
&= \Ex{\K(X,x)^2} + 2 \Ex{\K(X,x)\K(X,y)} + \Ex{\K(X,y)^2} \nonumber \\
&\leq \lambda_2(x)^2 + 2\lambda_2(x)\lambda_2(y) + \lambda_2(y)^2 \nonumber \\
&= \big( \lambda_2(x) + \lambda_2(y) \big)^2. \label{eq:step3}
\end{align}
Combining the bounds from equations \eqref{eq:step2} and \eqref{eq:step3} we obtain
\begin{align*}
&\Ex{\big(1-\K(X,x)p_n\big)_+ \big(1-\K(X,y)p_n\big)_+} \\
&\qquad\qquad\leq 1 - \big(\lambda(x)+\lambda(y)\big)p_n + \frac{1}{2}\big(\lambda_2(x)+\lambda_2(y)\big)^2p_n^2 \\
&\qquad\qquad= \Big(1 - \big(\lambda(x)+\lambda(y)\big)p_n\Big) \left(1+ \frac{1}{2}\cdot\frac{\big(\lambda_2(x)+\lambda_2(y)\big)^2p_n^2}{1 - \big(\lambda(x)+\lambda(y)\big)p_n}\right) \\
&\qquad\qquad\leq \Big(1 - \big(\lambda(x)+\lambda(y)\big)p_n\Big) \Big(1+ \big(\lambda_2(x)+\lambda_2(y)\big)^2p_n^2 \Big),
\end{align*}
for $n$ large enough since $\lambda(x) < 1$ for all $x\in\B$. Furthermore, if $x,y\in \G$ we have that
\begin{equation*}
\Ex{\big(1-\K(X,x)p_n\big)_+ \big(1-\K(X,y)p_n\big)_+} 
\leq \Big(1 - \big(\lambda(x)+\lambda(y)\big)p_n\Big) \left(1+ \frac{4n p_n^2}{\log^4 n} \right).
\end{equation*}
From this and the bound in equation \eqref{eq:step1} we get
\begin{align*}
\Ex{Y_1 Y_2 \ind{G}}
&\leq \int_\G\int_\G \Big(1 - \big(\lambda(x)+\lambda(y)\big)p_n\Big)^{n-2} \left(1+ \frac{4n p_n^2}{\log^4 n} \right)^{n-2} d\mu(x)d\mu(y) \\
&\leq \int_\G\int_\G \Big(1 - \big(\lambda(x)+\lambda(y)\big)p_n\Big)^{n-2} \exp{\frac{4n(n-2) p_n^2}{\log^4 n}} d\mu(x)d\mu(y) \\
&\leq (1+o(1))\int_\B\int_\B \Big(1 - \big(\lambda(x)+\lambda(y)\big)p_n\Big)^{n-2} d\mu(x)d\mu(y).
\end{align*}
Note that since the right term of the inequality in \eqref{eq:expected} is positive we have
\begin{align*}
\Ex{Y_1}\Ex{Y_2}
&\geq \int_\B (1-\lambda(x)p_n)^{n-1} d\mu(x) \int_\B (1-\lambda(y)p_n)^{n-1} d\mu(y) \\
&= \int_\B\int_\B (1-\lambda(x)p_n)(1-\lambda(y)p_n)^{n-1}  d\mu(x)d\mu(y) \\
&\geq \int_\B\int_\B \Big(1-\big(\lambda(x)+\lambda(y)\big)p_n\Big)^{n-1}  d\mu(x)d\mu(y). 
\end{align*}
Therefore, the proof is complete since we have
\[
\Ex{Y_1 Y_2 \ind{G}} \leq (1+o(1))\Ex{Y_1}\Ex{Y_2}. \qedhere
\]
\end{proof}

\section{Connectivity threshold}\label{sec:connectivity}

The objective of this part is to prove that once the graph does not have isolated vertices, which happens when $\lam > 1$, then there is only one connected component, i.e., the graph is connected. The proof has two parts. First we prove that every component is of linear size, and then we show that any pair of linear-sized sets are connected. 

\subsection{Every component is large}

To prove that there are no small components we use a first moment bound. Given two sets of vertices $A,B$ we write $A\notcx B$ for the event that $A$ does not connect to $B$, i.e., $A\notcx B = \cap_{i\in A}\cap_{j\in B} \{(i,j)\notin E_n\}$.

\begin{lemma}\label{lem:laplace}
Let $\lambda_2\in L^\infty(\S,\mu)$. Then for $1\leq k < n$ and any set $A\subset\{1,\dots,n\}$ of size $|A| = k$ we have
\[
\Pr{A\notcx A^c} \leq \left(1 - \lam k p_n + \norm{\lambda_2}_\infty^2 k^2 p_n^2/2\Big.\right)^{n-k}.
\]
\end{lemma}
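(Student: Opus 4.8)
The plan is to condition on the point locations, exploit conditional independence of the edges, and reduce the estimate to the same exponential-then-quadratic bound used in \eqref{eq:step2}--\eqref{eq:step3}. By exchangeability of $X_1,\dots,X_n$ I may assume $A=\{1,\dots,k\}$, so that $A^c=\{k+1,\dots,n\}$ has $n-k$ elements. Conditionally on $X_1,\dots,X_n$ the edges are present independently, hence
\[
\Pr{A\notcx A^c \mid X_1,\dots,X_n} = \prod_{i=1}^{k}\prod_{j=k+1}^{n}\big(1-\K(X_i,X_j)p_n\big)_+ .
\]
Since $X_{k+1},\dots,X_n$ are i.i.d.\ and independent of $X_1,\dots,X_k$, conditioning on $X_1,\dots,X_k$ and using conditional independence turns the product over $j$ into an $(n-k)$-th power, giving
\[
\Pr{A\notcx A^c} = \Ex{\,g(X_1,\dots,X_k)^{\,n-k}\,},
\qquad
g(x_1,\dots,x_k):=\Ex{\prod_{i=1}^{k}\big(1-\K(X,x_i)p_n\big)_+ }.
\]

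Next I would replace $g$ by a deterministic constant. Using $(1-a)_+\le e^{-a}$ for $a\ge 0$ gives $\prod_{i=1}^{k}(1-\K(X,x_i)p_n)_+\le \exp{-p_n S}$ with $S:=\sum_{i=1}^{k}\K(X,x_i)$, and then $e^{-t}\le 1-t+\tfrac12 t^2$ for $t\ge 0$ yields, exactly as in \eqref{eq:step2},
\[
g(x_1,\dots,x_k)\le 1-p_n\,\Ex{S}+\tfrac12 p_n^2\,\Ex{S^2}.
\]
Now $\Ex{S}=\sum_{i=1}^{k}\lambda(x_i)\ge k\lam$ for $\mu^{\otimes k}$-almost every $(x_1,\dots,x_k)$, since $\lambda\ge\lam$ a.e.\ by definition of the isolation parameter, and the Cauchy--Schwarz bound of \eqref{eq:step3} generalizes to
\[
\Ex{S^2}=\sum_{i,j=1}^{k}\Ex{\K(X,x_i)\K(X,x_j)}\le\Big(\sum_{i=1}^{k}\lambda_2(x_i)\Big)^2\le k^2\norm{\lambda_2}_\infty^2 .
\]
Combining these, $g(X_1,\dots,X_k)\le B$ almost surely, where $B:=1-\lam k p_n+\norm{\lambda_2}_\infty^2 k^2 p_n^2/2$.

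To conclude I would check that $B$ can legitimately be taken outside the expectation. Since $\lambda(x)\le\lambda_2(x)$ pointwise we have $\lam\le\norm{\lambda_2}_\infty$, so the quadratic $t\mapsto 1-\lam t+\norm{\lambda_2}_\infty^2 t^2/2$ has negative discriminant and is therefore strictly positive; in particular $B>0$. As $g\ge 0$ and $s\mapsto s^{n-k}$ is nondecreasing on $[0,\infty)$, this gives $g(X_1,\dots,X_k)^{n-k}\le B^{n-k}$ a.s., and taking expectations yields $\Pr{A\notcx A^c}\le B^{n-k}$, as claimed. The argument is essentially bookkeeping; the steps needing attention are applying the two elementary inequalities in the right direction and verifying $B>0$ so the final monotonicity step is valid, and I do not anticipate a genuine obstacle.
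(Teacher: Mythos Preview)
Your proof is correct and follows essentially the same route as the paper: condition on $X_1,\dots,X_k$, reduce to an $(n-k)$-th power via the i.i.d.\ structure of $X_{k+1},\dots,X_n$, and bound the inner expectation using $(1-a)_+\le e^{-a}$, $e^{-t}\le 1-t+\tfrac12 t^2$, and Cauchy--Schwarz together with $\lambda\ge\lam$ and $\lambda_2\le\norm{\lambda_2}_\infty$ almost everywhere. Your explicit check that $B>0$ (via $\lam\le\norm{\lambda_2}_\infty$) is a point the paper leaves implicit but which is indeed needed for the final monotonicity step.
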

\begin{proof}
Without loss of generality, assume $A=\{1,\dots,k\}$. We have
\begin{align*}
\Pr{A\notcx A^c} &= \Pr{\bigcap_{j\in A^c}\bigcap_{i\in A}\; (i,j)\notin E_n} \\
&= \Ex{\prod_{j\in A^c}\prod_{i\in A} \big(1-\K(X_j,X_i)p_n\big)_+} \\
&= \int_\S\cdots\int_\S \prod_{j\in A^c}\Ex{\prod_{i=1}^k \big(1-\K(X_j,x_i)p_n\big)_+} d\mu(x_1)\dots d\mu(x_k) \\
&\leq \int_\S\cdots\int_\S \left(1 - \sum_{i=1}^k\lambda(x_i) p_n + \frac{\norm{\lambda_2}_\infty^2  k^2 p_n^2}{2} \right)^{n-k} d\mu(x_1)\dots d\mu(x_k) \\
&\leq \left(1 - \lam k p_n + \norm{\lambda_2}_\infty^2 k^2 p_n^2 /2 \Big.\right)^{n-k},
\end{align*}
where the first inequality above follows from
\begin{align*}
\Ex{\prod_{i=1}^k \big(1-\K(X,x_i)p_n\big)_+}
&\leq \Ex{\exp{-\sum_{i=1}^k \K(X,x_i)p_n}} \\
&\leq \Ex{1 - \sum_{i=1}^k \K(X,x_i)p_n + \frac{1}{2}\left(\sum_{i=1}^k \K(X,x_i)\right)^2 p_n^2 } \\
&\leq 1 - \sum_{i=1}^k \lambda(x_i) p_n + \frac{\norm{\lambda_2}_\infty^2 k^2 p_n^2}{2},
\end{align*}
which holds because
\begin{align*}
\Ex{\left(\sum_{i=1}^k \K(X,x_i)\right)^2}
&= \sum_{i=1}^k\sum_{j=1}^k \Ex{\K(X,x_i)\K(X,x_j)} \\
&\leq \sum_{i=1}^k\sum_{j=1}^k \lambda_2(x_i)\lambda_2(x_j) \\
&\leq \norm{\lambda_2}_\infty^2 k^2. \qedhere
\end{align*}
\end{proof}

To get rid of larger components we need the following result which is based on the concentration of the number of edges of the graph.

\begin{lemma}\label{lem:concentration}
Let $\lambda_2\in L^\infty(\S,\mu)$. Then for $1\leq k \leq n/2$ and any set $A\subset\{1,\dots,n\}$ of size $|A| = k$ we have
\[
\Pr{A\notcx A^c} \leq e^{-p_n \lam k(n-k)/2} +  ke^{-n \lam^2 / 16 \norm{\lambda_2}_\infty^2}.
\]
\end{lemma}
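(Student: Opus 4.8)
The plan is to condition on the point positions $X_1,\dots,X_n$ — given which the edges of $G(n,\K)$ are independent — and to reduce the event $A\notcx A^c$ to a lower‑tail estimate for the total kernel mass crossing the cut. Set $S=\sum_{i\in A}\sum_{j\in A^c}\K(X_i,X_j)$. Exactly as in the proof of Lemma~\ref{lem:laplace} (using $(1-u)_+\le e^{-u}$),
\[
\Pr{A\notcx A^c\mid X_1,\dots,X_n}=\prod_{i\in A}\prod_{j\in A^c}\big(1-\K(X_i,X_j)p_n\big)_+\le e^{-p_nS},
\]
so that $\Pr{A\notcx A^c}\le\Ex{e^{-p_nS}}$. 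I would then split this expectation according to whether $S\ge\lam k(n-k)/2$: on that event the integrand is at most $e^{-p_n\lam k(n-k)/2}$, while the complementary contribution is at most $\Pr{S<\lam k(n-k)/2}$. Hence the lemma reduces to the bound $\Pr{S<\lam k(n-k)/2}\le k\,e^{-n\lam^2/16\norm{\lambda_2}_\infty^2}$.

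To control that lower tail, write $S=\sum_{i\in A}W_i$ with $W_i=\sum_{j\in A^c}\K(X_i,X_j)$. If $W_i\ge\lam(n-k)/2$ for every $i\in A$ then $S\ge\lam k(n-k)/2$, so $\{S<\lam k(n-k)/2\}\subseteq\bigcup_{i\in A}\{W_i<\lam(n-k)/2\}$; since the $X_i$ are i.i.d.\ the $W_i$ are identically distributed, and a union bound gives $\Pr{S<\lam k(n-k)/2}\le k\,\Pr{W_1<\lam(n-k)/2}$. Now condition on $X_1=x$: then $W_1$ is a sum of $n-k$ i.i.d.\ nonnegative terms $\K(x,X_j)$, each with mean $\lambda(x)\ge\lam$ and $\Ex{\K(x,X)^2}=\lambda_2(x)^2\le\norm{\lambda_2}_\infty^2$. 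Using $e^{-u}\le 1-u+u^2/2$ for $u\ge 0$, one gets $\Ex{e^{-\theta\K(x,X)}}\le e^{-\theta\lambda(x)+\theta^2\lambda_2(x)^2/2}$ for every $\theta>0$, and a Chernoff bound then yields
\[
\Pr{W_1<\lam(n-k)/2\mid X_1=x}\le \exp{(n-k)\left(\tfrac{\theta^2}{2}\norm{\lambda_2}_\infty^2-\tfrac{\theta\lam}{2}\right)}.
\]
Choosing $\theta=\lam/(2\norm{\lambda_2}_\infty^2)$ bounds the right‑hand side by $e^{-(n-k)\lam^2/8\norm{\lambda_2}_\infty^2}\le e^{-n\lam^2/16\norm{\lambda_2}_\infty^2}$ since $k\le n/2$; this is uniform in $x$, hence holds for $\Pr{W_1<\lam(n-k)/2}$, and assembling the pieces finishes the proof.

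The one genuinely delicate point is that $\K$ is not assumed bounded (only $\lambda_2\in L^\infty$), so the crossing sums $W_i$ cannot be fed into a standard Bernstein inequality with bounded increments. What rescues the argument is nonnegativity: $\K\ge 0$ forces the \emph{downward} fluctuations of $W_1$ to be controlled anyway, and the elementary estimate $e^{-u}\le 1-u+u^2/2$ turns the hypothesis $\lambda_2\in L^\infty$ directly into the exponential bound above. Beyond that, the only care needed is constant bookkeeping — the factor $n-k\ge n/2$ combined with the optimization over $\theta$ is what produces the exponent $n\lam^2/16\norm{\lambda_2}_\infty^2$.
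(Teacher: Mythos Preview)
Your proposal is correct and follows essentially the same route as the paper: bound $\Pr{A\notcx A^c}\le\Ex{e^{-p_nS}}$, split on the event $\{S\ge\lam k(n-k)/2\}$, reduce the complementary probability via the union bound to a lower-tail estimate for a single $W_i$, and control that tail using only nonnegativity and the second-moment hypothesis $\lambda_2\in L^\infty$. The sole cosmetic difference is that the paper quotes a ready-made Bernstein-type inequality for sums of nonnegative variables, whereas you derive the identical exponent $-(n-k)\lam^2/8\norm{\lambda_2}_\infty^2$ by hand via $e^{-u}\le 1-u+u^2/2$ and a Chernoff optimization.
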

\begin{proof}
Without loss of generality assume $A=\{1,\dots,k\}$. We have
\begin{align}
\Pr{A\notcx A^c} &= \Pr{\bigcap_{j\in A^c}\bigcap_{i\in A}\; (i,j)\notin E_n} \notag \\
&= \Ex{\prod_{i\in A}\prod_{j\in A^c} \big(1-\K(X_i,X_j)p_n\big)_+} \notag \\
&\leq \Ex{\exp{-p_n \sum_{i\in A}\sum_{j\in A^c} \K(X_i,X_j)}} \notag \\
&= \int_\S\cdots\int_\S \Ex{e^{-p_n \sum_{i=1}^k Z(x_i)}} d\mu(x_1)\dots d\mu(x_k), \label{eq:expsum}
\end{align}
where we define $Z(x_i) = \sum_{j\in A^c} \K(x_i,X_j)$.

We use the following Bernstein type inequality:
\textit{If $Y_1,Y_2,\dots,Y_n$ are non-negative independent random variables and $Y = \sum_{j=1}^n Y_j$ then}
\[
\Pr{Y\leq \E Y - t} \leq e^{-t^2 / 2 \sum_{j=1}^n \E Y_j^2}.
\]
See Theorem 3.5 of \cite{CL06a} (also the monograph \cite{McD98} or chapter 2 of the book \cite{CL06b}). \\
For every $1\leq i\leq k$ we apply the inequality to $Y = Z(x_i)$ with $Y_j = \K(x_i,X_j)$ so that $\E Y_j^2 = \lambda_2(x_i)^2$ and $t = \E Y / 2 = \E Z(x_i) / 2 = \lambda(x_i)(n-k) / 2$ to obtain
\[
\Pr{ Z(x_i) \leq \frac{\E Z(x_i)}{2} \Big.} \leq e^{-\lambda(x_i)^2 (n-k) / 8 \lambda_2(x_i)^2}.
\]
Let $\U = \{x\in \S: \lambda(x) \geq \lam \text{ and } \lambda_2(x) \leq \norm{\lambda_2}_{\infty} \}$. Note that $\mu(\S\setminus\U) = 0$. If $x_i \in \U$ for all $i=1,\dots,k$, we have
\begin{align*}
\Pr{Z(x_i) \leq \frac{\lam (n-k)}{2}}
&\leq \Pr{ Z(x_i) \leq \frac{\E Z(x_i)}{2} \Big.} \\
&\leq e^{-\lambda(x_i)^2 (n-k) / 8 \lambda_2(x_i)^2} \\
&\leq e^{- n \lam^2 / 16 \norm{\lambda_2}_\infty^2},
\end{align*}
since $k \leq n/2$. Using the union bound we get
\[
\Pr{ \sum_{i=1}^k Z(x_i) \leq \frac{\lam k(n-k)}{2} } \leq
\Pr{\min_{i\in A} Z(x_i) \leq \frac{\lam (n-k)}{2}} \leq
k e^{- n \lam^2 / 16 \norm{\lambda_2}_\infty^2}.
\]

Let $E = E(x_1,\dots,x_k)$ be the event where $\sum_{i=1}^k Z(x_i) \geq \lam k(n-k)/2$. Then, using inequality \eqref{eq:expsum} we can write
\begin{align*}
\Pr{A\notcx A^c}
&\leq \int_\S\cdots\int_\S \Ex{e^{-p_n \sum_{i=1}^k Z(x_i)}(\ind{E}+\ind{E^c})} d\mu(x_1)\dots d\mu(x_k) \\
&\leq \int_\S\cdots\int_\S \left( \Ex{e^{-p_n \sum_{i=1}^k Z(x_i)}\ind{E}}+ \Pr{E^c} \right) d\mu(x_1)\dots d\mu(x_k) \\
&\leq \int_\U\cdots\int_\U \left( e^{-p_n \lam k(n-k)/2} +  ke^{-n \lam^2 / 16 \norm{\lambda_2}_\infty^2} \right) d\mu(x_1)\dots d\mu(x_k) \\
&\leq e^{-p_n \lam k(n-k)/2} +  ke^{-n \lam^2 / 16 \norm{\lambda_2}_\infty^2}.\qedhere
\end{align*}
\end{proof}

\begin{proposition}\label{prop:compsize}
Let $\lambda_2\in L^\infty(\S,\mu)$ and $\lam > 1$. Then, there exists $\delta > 0$ such that all connected components of $G(n,\K)$ have size greater than $\delta n$ with high probability.
\end{proposition}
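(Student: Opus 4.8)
The plan is to use a union bound over all possible ``small'' component sizes, splitting the range $1 \le k \le \delta n$ into two regimes according to which of the two lemmas gives a useful bound. If $G(n,\K)$ has a connected component of size exactly $k$, then there is a set $A$ with $|A| = k$ and $A \notcx A^c$; there are $\binom{n}{k}$ choices for such a set, so
\[
\Pr{\exists\text{ component of size }k} \le \binom{n}{k} \Pr{A \notcx A^c}
\]
for any fixed $A$ of size $k$. Summing over $k$ from $1$ to $\delta n$ and showing the sum is $o(1)$ for suitably small $\delta$ will finish the proof. The key input is that $\lam > 1$: fix $\eta > 0$ with $\lam (1-\delta) > 1 + \eta$ once $\delta$ is small, so that the linear term in Lemma \ref{lem:laplace} beats the $\log n$ coming from $\binom{n}{k}$.

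First I would handle the regime of very small $k$, say $1 \le k \le n/\log^2 n$, using Lemma \ref{lem:laplace}. There, $k^2 p_n^2 = k^2 (\log n)^2/n^2$ is of smaller order than $k p_n = k\log n/n$ (since $k p_n \le 1/\log n \to 0$), so the bound becomes $\Pr{A\notcx A^c} \le \exp(-(1+o(1))\lam k p_n (n-k)) = \exp(-(1+o(1))\lam k \log n)$. Combined with $\binom{n}{k} \le (en/k)^k \le n^k$, the $k$-th term is at most $\exp(k\log n - (1+o(1))\lam k\log n) = \exp(-(\lam - 1 - o(1)) k\log n)$, which since $\lam > 1$ sums to $o(1)$ (it is dominated by a geometric series with ratio $n^{-(\lam-1)/2}$).

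For the middle regime $n/\log^2 n \le k \le \delta n$, the quadratic term in Lemma \ref{lem:laplace} is no longer negligible, so I would switch to Lemma \ref{lem:concentration}. There the first term $\exp(-p_n \lam k(n-k)/2) = \exp(-\lam k(n-k)\log n/(2n))$; since $k \ge n/\log^2 n$ and $n - k \ge n/2$, this is at most $\exp(-\lam n \log n/(4\log^2 n)) = \exp(-\lam n/(4\log n))$, and multiplied by $\binom{n}{k} \le 2^n$ this is still $\exp(-\omega(n))$ once we note $n/\log n$ dominates $n$... wait, it does not, so more care is needed: use instead $k(n-k) \ge k \cdot n/2 \ge \delta n^2/2$ is false at the lower end, so the correct bound is $k(n-k)\log n/(2n) \ge (n/\log^2 n)(n/2)\log n/(2n) = n/(4\log n)$, and $\binom{n}{k} \le e^{n}$, giving $e^{n - n/(4\log n)}$ which is \emph{not} small. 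The fix is to keep the full factor $k(n-k) \ge \delta n \cdot n/2$ only in the upper part and, in the lower part $n/\log^2 n \le k \le n/2$, to instead bound $\binom{n}{k} \le n^k = e^{k\log n}$ and compare against $e^{-\lam k (n-k) p_n/2} = e^{-\lam k(n-k)\log n/(2n)} \le e^{-\lam k \log n/4}$ using $n-k \ge n/2$; then the $k$-th term is $\le e^{-(\lam/4 - 1)k\log n}$, which is summable provided $\lam > 4$ — too strong. Therefore in this regime I would instead bound $\binom{n}{k} \le \binom{n}{\delta n} \le e^{n H(\delta)}$ where $H$ is the binary entropy, choose $\delta$ small enough that $H(\delta) < \lam/(16\|\lambda_2\|_\infty^2) \wedge$ (something), and match against the exponential decay; the dominant worry is the second term $k e^{-n\lam^2/16\|\lambda_2\|_\infty^2}$ in Lemma \ref{lem:concentration}, which is already $e^{-cn}$ for a fixed constant $c > 0$, so summing against $\binom{n}{k} \le e^{nH(\delta)}$ requires $H(\delta) < c$, which we achieve by taking $\delta$ sufficiently small.

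The main obstacle I expect is precisely this bookkeeping: choosing the single constant $\delta$ and the split point between the two regimes so that \emph{both} terms in Lemma \ref{lem:concentration} survive multiplication by the binomial coefficient $\binom{n}{k} \le e^{nH(\delta)}$, and simultaneously so that in the small-$k$ regime the gain $\lam > 1$ in Lemma \ref{lem:laplace} is not eaten by the $o(1)$ error from the quadratic term. Concretely, I would fix $\delta > 0$ small enough that $H(\delta) < \min\{\lam^2/(16\|\lambda_2\|_\infty^2),\ (\lam - 1)/2 \cdot (\text{something})\}$ and verify the two sums are $o(1)$; the only genuinely delicate point is confirming that $k(n-k) p_n / n$ stays of order at least $n/\log^2 n \cdot \log n / n \cdot \text{const} = \text{const}/\log n \cdot n$, i.e.\ grows faster than any polynomial in the relevant range, which it does because for $k$ in the middle range $k(n-k)p_n \ge (n/\log^2 n)(n/2)(\log n/n) = n/(2\log n) \to \infty$ faster than $\log\binom{n}{k}$ only after the entropy bound is inserted — so the switch to Lemma \ref{lem:concentration} and the entropy bound $\binom{n}{k}\le e^{nH(k/n)}$ must be made together.
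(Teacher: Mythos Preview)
Your overall architecture matches the paper's: split the range $1\le k\le \delta n$ into a small-$k$ regime handled by Lemma~\ref{lem:laplace} and a larger-$k$ regime handled by Lemma~\ref{lem:concentration}, and use a union bound over component sizes. The small-$k$ part is fine and essentially identical to what the paper does.

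The muddle is in the middle regime, and you correctly diagnose the symptom (the naive bound $\binom{n}{k}\le n^k$ against the first term of Lemma~\ref{lem:concentration} would need $\lam>4$) but your proposed cure --- replacing $\binom{n}{k}$ by the \emph{uniform} bound $e^{nH(\delta)}$ --- does not work for that term. At the bottom of your middle range, say $k\sim n/\log^2 n$, the first term of Lemma~\ref{lem:concentration} is only $\exp(-c\,n/\log n)$, which cannot absorb a fixed $e^{cn}$ factor; the uniform entropy bound is fine for the second term $ke^{-n\lam^2/16\|\lambda_2\|_\infty^2}$ (that part of your argument is correct), but not for the first. You seem to sense this in your last sentence, where you switch to the sharp bound $\binom{n}{k}\le e^{nH(k/n)}$, but you never actually carry out the comparison.

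The paper's fix is exactly to keep the $k$-dependence in the binomial bound: use $\binom{n}{k}\le (ne/k)^k$ and split at $k=en^{3/4}$. For $k>en^{3/4}$ one has $\log k>1+\tfrac34\log n$, so
\[
\log\binom{n}{k}\le k(1+\log n-\log k)<\tfrac{1}{4}\,k\log n,
\]
and since $n-k\ge n/2$ the first term of Lemma~\ref{lem:concentration} contributes $-\tfrac{\lam}{4}k\log n$ in the exponent, giving $\exp(-\tfrac{\lam-1}{4}k\log n)$ with no constraint on $\lam$ beyond $\lam>1$. The second term is handled, as you say, by choosing $\delta$ small enough that $\rho-\rho\log\rho\le \lam^2/32\|\lambda_2\|_\infty^2$ for $\rho\le\delta$. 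Your split point $n/\log^2 n$ would also work once you use the sharp Stirling bound, but the paper's choice of $en^{3/4}$ makes the arithmetic for the first term transparent: the $-k\log k$ from Stirling supplies exactly $\tfrac34 k\log n$, reducing the required $\lam>4$ to $\lam>1$.
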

\begin{proof}
Let $N_k$ denote the number of components of size exactly $k$ and $A=\{1,\dots,k\}$. By Lemma \ref{lem:laplace}, we have that
\begin{align}
\E N_k &\leq \binom{n}{k} \Pr{A\notcx A^c} \notag \\
&\leq n^k \left(1 - \lam k p_n + \norm{\lambda_2}_\infty^2 k^2 p_n^2/2 \Big.\right)^{n-k} \notag \\
&\leq \exp{k\log n - (n-k)\lam k p_n + (n-k) \norm{\lambda_2}_\infty^2 k^2 p_n^2/2 \Big.} \notag \\
&\leq \exp{k\log n \left(1 - \lam + \frac{\lam k}{n} + \frac{\norm{\lambda_2}_\infty^2 (n-k) k \log n}{2n^2} \right) } \label{eq:terms} \\
&\leq e^{-(\lam-1) k\log n / 2}, \notag
\end{align}
for $k = o(n / \log n)$ because $k/n \to 0$ and $k \log n / n \to 0$, which implies that the last two terms in equation \eqref{eq:terms} are smaller than $\eps = (\lam -1)/4$ for $n$ large enough. Therefore,
\begin{align*}
\Pr{\sum_{k=1}^{en^{3/4}} N_k > 0}
&\leq \sum_{k=1}^{en^{3/4}} \E N_k 
&\leq \sum_{k=1}^{en^{3/4}}  e^{-(\lam-1) k\log n / 2} 
&\leq \frac{e^{-(\lam-1) \log n / 2}}{1-e^{-(\lam-1) \log n / 2}} \to 0.
\end{align*}
Fix $0< \delta \leq 1/2$ to be chosen later. For the rest of the range using Lemma \ref{lem:concentration} we obtain
\begin{align*}
\Ex{N_k} &\leq \binom{n}{k} \Pr{A\notcx A^c} \\
&\leq \left(\frac{ne}{k}\right)^k \left(e^{-p_n \lam k(n-k)/2} +  ke^{-n \lam^2 /16 \norm{\lambda_2}_\infty^2} \right) \\
&\leq \underbrace{ \left(\frac{ne}{k}\right)^k e^{-p_n \lam k(n-k)/2} }_{[LT]}
+ \underbrace{ \left(\frac{ne}{k}\right)^k ke^{-n \lam^2 /16 \norm{\lambda_2}_\infty^2} } _{[RT]}.
\end{align*}
For the left term we have
\[
[LT] \leq \exp{k\left(1 + \log n - \log k - \frac{\lam}{4} \log n \right)} \leq e^{-(\lam-1) k \log n / 4},
\]
if $k > en^{3/4}$. While for the right term
\[
[RT] \leq k \cdot \exp{n\left( \frac{k}{n}\:\: - \frac{k}{n} \log\frac{k}{n}\:\: -\frac{\lam^2}{16 \norm{\lambda_2}_\infty^2}\right)} \leq n e^{-n  \lam^2 / 32 \norm{\lambda_2}_\infty^2},
\]
if $k/n < \delta$ where $\delta = \max\left\{\rho\in[0,1/2]: \rho - \rho \log \rho \leq \lam^2 / 32 \norm{\lambda_2}_\infty^2 \right\} > 0$. Therefore,
\begin{align*}
\Pr{\sum_{k > en^{3/4}}^{\delta n} N_k > 0}
&\leq \sum_{k > en^{3/4}}^{\delta n} \E N_k \\
&\leq n \left(e^{-(\lam-1) n^{3/4} \log n / 2} + ne^{-n \lam^2 /32 \norm{\lambda_2}_\infty^2} \right) \to 0.
\end{align*}
Thus we have proved that with high probability the graph has no component of size smaller than $\delta n$.
\end{proof}

\subsection{All vertices are connected}

To prove that every vertex is connected we discretize the space $\S$ using a finite partition and work with a lower approximation of the kernel $\K$. For this approximation to behave nicely we need $\K$ to be continuous almost everywhere.  For $\A\subset \S$ we write $\diam(\A)=\sup\{d(x,y): x, y \in \A\}$, where $d$ is the metric on $\S$.

\begin{lemma}[Lemma 7.1 from \cite{BJR07}]\label{lem:partitions}
Given $(\S,\mu)$ there exists a sequence of finite partitions $\P_m=\{\A_{m,1},\dots,\A_{m,M_m}\}$, $m>1$, of $\S$ such that
\begin{enumerate}[$(a)$]
\item each $\A_{m,i}$ is measurable and $\mu(\partial \A_{m,i})=0$;
\item for each $m$, $\P_{m+1}$ refines $\P_m$, i.e., each $\A_{m,i}$ is a union $\cup_{j\in J_{m,i}} \A_{m+1,j}$ for some set $J_{m,i}$;
\item let $i_m(x)$ be such that $x\in \A_{m,i_m(x)}$, then $\diam(\A_{m,i_m(x)}) \to 0$ as $m\to \infty$ for $\mu$ almost every $x\in \S$.
\end{enumerate}
\end{lemma}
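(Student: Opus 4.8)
The plan is to construct the partitions $\P_m$ explicitly from open balls centred at a countable dense set, choosing the radii so as to avoid the at most countably many ``bad'' values at which a sphere carries positive $\mu$-mass.

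First I would fix a countable dense set $\{x_1,x_2,\dots\}\subset\S$, which exists because $\S$ is separable. For each $i$ the spheres $S_{i,r}=\{y:d(x_i,y)=r\}$, $r>0$, are pairwise disjoint, so since $\mu$ is a finite measure only countably many of them can have $\mu(S_{i,r})>0$; hence $G_i=\{r>0:\mu(S_{i,r})=0\}$ is co-countable, in particular dense in $(0,\infty)$. For every pair $(i,k)$ with $k\ge1$ I would select a radius $r_{i,k}\in G_i$ with $\tfrac1{k+1}<r_{i,k}<\tfrac1k$, and form the countable family of open balls $\{B(x_i,r_{i,k})\}_{i,k\ge1}$, enumerated as $C_1,C_2,\dots$. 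Define $\P_m$ to be the partition of $\S$ into the nonempty atoms of the finite Boolean algebra generated by $C_1,\dots,C_m$, i.e.\ the nonempty sets of the form $\bigcap_{j=1}^m C_j^{\pm}$, where each $C_j^{\pm}$ is either $C_j$ or $\S\setminus C_j$. This is a finite partition into at most $2^m$ Borel sets, and since the algebra generated by $C_1,\dots,C_m$ sits inside the one generated by $C_1,\dots,C_{m+1}$, the partition $\P_{m+1}$ refines $\P_m$, which is property $(b)$.

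For property $(a)$, measurability is immediate. For the boundaries I would invoke the elementary facts $\partial(A\cap B)\subseteq\partial A\cup\partial B$ and $\partial(A^c)=\partial A$ to conclude, by induction on $m$, that the boundary of any atom $\bigcap_{j=1}^m C_j^{\pm}$ is contained in $\bigcup_{j=1}^m\partial C_j$. Since an open ball satisfies $\partial B(x,r)\subseteq\{y:d(x,y)=r\}$, we get $\partial C_j\subseteq S_{i_j,r_{i_j,k_j}}$, a set of $\mu$-measure zero by the choice $r_{i_j,k_j}\in G_{i_j}$; hence $\mu(\partial\A_{m,i})=0$.

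Property $(c)$ is the one point I expect to need care, and in fact I would prove the diameters shrink for \emph{every} $x$, not merely $\mu$-a.e. Fix $x\in\S$ and $k\ge1$. By density of $\{x_i\}$ there is an $i$ with $d(x,x_i)<\tfrac1{k+1}<r_{i,k}$, so $x\in B(x_i,r_{i,k})$; this ball is $C_m$ for some $m$, and $\diam C_m\le 2r_{i,k}<\tfrac2k$. For every $m'\ge m$ the partition $\P_{m'}$ refines the algebra generated by $C_1,\dots,C_m$, so the atom $\A_{m',i_{m'}(x)}$ is contained in $C_m$ and thus has diameter $<\tfrac2k$; letting $k\to\infty$ gives $\diam(\A_{m,i_m(x)})\to0$. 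The subtle ingredient is the lower bound $\tfrac1{k+1}$ on $r_{i,k}$: the radii must not merely shrink to $0$ but stay above the scale at which the dense set is $\tfrac1{k+1}$-dense, for otherwise a fixed $x$ could avoid every small ball of the family and $(c)$ would fail. Everything else is routine measure theory.
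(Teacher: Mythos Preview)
The paper does not give its own proof of this lemma: it is quoted verbatim as Lemma~7.1 of \cite{BJR07} and used as a black box. Your argument is correct and is in fact the standard construction (and the one used in \cite{BJR07}): cover by balls centred at a countable dense set, perturb radii to avoid the countably many spheres of positive $\mu$-mass, and take the atoms of the successive finite Boolean algebras they generate. Properties $(a)$ and $(b)$ follow exactly as you say, and your verification of $(c)$---including the observation that the lower bound $r_{i,k}>\tfrac{1}{k+1}$ is what guarantees every point is eventually captured by arbitrarily small balls of the family---is clean and even yields the conclusion for all $x$, not just $\mu$-almost every $x$.
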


\begin{definition}
Given a sequence of partitions $\P_m$ as above, we define the \emph{lower approximation kernels} by
\[
\K_m(x, y) = \inf\{ \K(x', y') : x' \in \A_{m,i_m(x)}, y' \in \A_{m,i_m(y)} \},
\]
and the \emph{partition graphs} $H_m = (V_m, E_m)$ where the vertex set is given by $V_m = \big\{ 1\leq i\leq M_m: \mu(\A_{m,i}) > 0 \big\}$ and $(i,j)$ is an edge if $\K_m > 0$ in $\A_{m,i}\times \A_{m,j}$.
\end{definition}
Note that if $\K$ is continuous almost everywhere it holds that $\K_m(x,y) \nearrow \K(x,y)$ as $m\to \infty$, for almost every $(x,y) \in \S^2$.

\begin{lemma}\label{lem:giant}
If $\K$ is irreducible and continuous $(\mu\otimes\mu)$-almost everywhere, then for any $\eps > 0$ there exists $m>1$ and a connected component $C_m$ in $H_m$ with $\mu(\S\setminus \cup_{i\in C_m} \A_{m,i}) < \eps$.
\end{lemma}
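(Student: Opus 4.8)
\textbf{Proof proposal for Lemma \ref{lem:giant}.}

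The plan is to exploit irreducibility together with the monotone convergence $\K_m \nearrow \K$ a.e.\ to show that the ``connectivity structure'' of $H_m$ captures almost all the mass of $\mu$ once $m$ is large. First I would reformulate the statement: for a partition $\P_m$, call a set $S \subset V_m$ \emph{good} if it is a union of connected components of $H_m$, and associate to it the $\mu$-mass $\mu(\cup_{i\in S}\A_{m,i})$. The claim is that for every $\eps>0$ some $H_m$ has a single component carrying mass $>1-\eps$. Suppose not; then for every $m$, every connected component of $H_m$ has $\mu$-mass at most $1-\eps$. Since $\P_{m+1}$ refines $\P_m$, the edge relation is monotone: if $\K_m>0$ on $\A_{m,i}\times\A_{m,j}$ then $\K_{m+1}>0$ on the corresponding refined cells, so components of $H_m$ only split further, never merge, as $m$ increases. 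Hence I can build a decreasing sequence of ``cuts'': for each $m$ choose a union $\B_m$ of components of $H_m$ with $\mu(\B_m)$ as close as possible to $1/2$ subject to staying in, say, $[\eta, 1-\eta]$ for a fixed small $\eta>0$ — the existence of such a balanced-ish cut needs a short argument (greedily adding components; since each has mass $\le 1-\eps$ one can always land the partial sums in a window of width $1-\eps$, and irreducibility at finite scale will be used to keep $\mu(\B_m)$ bounded away from $0$ and $1$).

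The core of the argument is then a compactness/limiting step. Along a subsequence, using that the $\P_m$ refine each other, the sets $\cup_{i\in\B_m}\A_{m,i}$ can be taken monotone (or at least to stabilize on each fixed cell), so they converge to a measurable set $\A_\infty\subset\S$ with $\mu(\A_\infty)\in[\eta,1-\eta]$, hence $0<\mu(\A_\infty)<1$. I claim $\K=0$ a.e.\ on $\A_\infty\times\A_\infty^c$, which contradicts irreducibility. To see this, fix $m$; by construction there is no edge in $H_m$ between $\B_m$ and its complement, meaning $\K_m=0$ on $(\cup_{i\in\B_m}\A_{m,i})\times(\cup_{j\notin\B_m}\A_{m,j})$. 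For a point $(x,y)$ with $x\in\A_\infty$, $y\in\A_\infty^c$, and $\diam(\A_{m,i_m(x)})\to0$, $\diam(\A_{m,i_m(y)})\to0$ (which holds for a.e.\ such pair by Lemma \ref{lem:partitions}$(c)$), for all large $m$ the cell of $x$ lies in $\B_m$ and the cell of $y$ lies outside, so $\K_m(x,y)=0$; letting $m\to\infty$ and using $\K_m(x,y)\nearrow\K(x,y)$ gives $\K(x,y)=0$. Thus $\K=0$ $(\mu\otimes\mu)$-a.e.\ on $\A_\infty\times\A_\infty^c$, so $\A_\infty$ witnesses reducibility — contradiction. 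Therefore some $H_m$ has a component of mass exceeding $1-\eps$, which is exactly $C_m$ with $\mu(\S\setminus\cup_{i\in C_m}\A_{m,i})<\eps$.

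The step I expect to be the main obstacle is making the ``balanced cut'' selection and its limit rigorous: one must ensure the chosen unions of components $\B_m$ are genuinely nested (or can be made so after passing to a subsequence) so that $\A_\infty$ is well defined and its mass stays strictly inside $(0,1)$, rather than degenerating. A clean way around potential non-nestedness is to avoid choosing cuts at all and instead argue directly: let $m\to\infty$ and let $\mathcal{C}_m$ be the component of $H_m$ of largest $\mu$-mass; because components only refine, $\cup_{i\in\mathcal C_{m+1}}\A_{m+1,i}\subset\cup_{i\in\mathcal C_m}\A_{m,i}$ can be arranged, so these sets decrease to some $\mathcal C_\infty$ with $\mu(\mathcal C_\infty)=\lim_m \mu(\cup\mathcal C_m)=:\beta$. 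If $\beta\le 1-\eps$ for all such sequences, the same limiting computation shows $\K=0$ a.e.\ on $\mathcal C_\infty\times\mathcal C_\infty^c$; one then checks $\mu(\mathcal C_\infty)>0$ (else the largest component has vanishing mass, but finitely many components partition $\S$, forcing one to have mass $\ge 1/M_m$ — and a small separate argument shows $M_m$ need not blow up the relevant component, or rather that the union of \emph{some} bounded collection of components is bounded below; here one uses that at any fixed scale $\K_m$ being eventually positive on a positive-measure cell forces edges) — and then reducibility follows, the desired contradiction. I would present the argument in this second form, isolating the verification that $\mu(\mathcal C_\infty)>0$ as the one genuinely delicate point.
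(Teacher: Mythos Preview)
Your central monotonicity claim is backwards, and this breaks both versions of the argument. From the correct premise ``if $\K_m>0$ on $\A_{m,i}\times\A_{m,j}$ then $\K_{m+1}>0$ on the refined cells'' you conclude that components of $H_m$ \emph{split} as $m$ grows. The opposite is true: since $\K_{m+1}\ge\K_m$, every edge of $H_m$ induces edges between all the corresponding refined cells in $H_{m+1}$, so the refinement of a component of $H_m$ stays connected in $H_{m+1}$ (any two refined cells are linked via the refinement of an adjacent cell), while \emph{new} edges may appear. Hence components can only \emph{merge}. Consequently your ``balanced cut'' sets $\B_m$ have no reason to be nested (in either direction), and in your second version the largest component's cell-union is, if anything, increasing rather than decreasing, so $\mathcal{C}_\infty$ as a decreasing limit does not exist. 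The delicate point you flag, $\mu(\mathcal{C}_\infty)>0$, is therefore not the real obstacle; the construction of $\mathcal{C}_\infty$ itself collapses.

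The paper exploits exactly this correct (increasing) monotonicity and avoids any compactness or contradiction argument. It first uses continuity to find a single edge $(i_0,j_0)\in E_{m_0}$; for $m\ge m_0$ it lets $C_m$ be the component of $H_m$ containing all refinements of $\A_{m_0,i_0}$, so the cell-unions $\B_m=\cup_{i\in C_m}\A_{m,i}$ are \emph{increasing} to some $\B$. Because $C_m$ is a component, $\K_m=0$ a.e.\ on $\B_m\times(\S\setminus\B_m)$; letting $m\to\infty$ along this increasing family gives $\K=0$ a.e.\ on $\B\times(\S\setminus\B)$. Irreducibility forces $\mu(\S\setminus\B)=0$ (since $\mu(\B)\ge\mu(\A_{m_0,i_0})>0$), and then $\B_m\nearrow\B$ yields the desired $m$. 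Once you reverse the monotonicity, this direct construction is both shorter and cleaner than the contradiction route you sketch.
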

\begin{proof}
We first show that we can find $m_0$ such that there exists $(i_0,j_0)\in E_{m_0}$. Since $\K \neq 0$ and is continuous almost everywhere there exists $(x_0,y_0)$ and $\delta > 0$ such that $\mu(B(x_0,\delta)), \mu(B(y_0,\delta)) > 0$ and if $d(x,x_0), d(y,y_0) < \delta$ then $\K(x,y) > 0$. Pick $m_0$ so that $\diam(\A_{m_0,i_{m_0}(x)}) < \delta$ and $\diam(\A_{m_0,i_{m_0}(y)}) < \delta$ then we have that $(i_{m_0}(x), i_{m_0}(y)) \in E_{m_0}$.

For $m \geq m_0$, since $\K_m \geq \K_{m_0} > 0$ on $\A_{m_0,i_0}\times \A_{m_0,j_0}$, we have that all the vertices $i\in V_m$ such that $\A_{m,i} \subseteq \A_{m_0,i_0}$ are in the same connected component of $H_m$ which we denote by $C_m$.

Let $\B_m = \cup_{i\in C_m} \A_{m,i}$ and $\S_m = \cup_{i\in V_m} \A_{m,i}$. If $i\in C_m$ and $j\notin C_m$ then $\K_m = 0$ on $\A_{m,i}\times \A_{m,j}$ therefore $\K_m = 0$ on $\B_m \times (\S_m \setminus \B_m)$ and thus almost everywhere on $\B_m \times (\S \setminus \B_m)$. Now define $\B = \cup_{m=1}^\infty \B_m$. If $n\geq m$, then $\B_m \subseteq \B_n$ so $\K_n =0$ almost everywhere on  $\B_m\times(\S\setminus \B) \subseteq \B_n\times (\S\setminus \B_n)$. Letting $n\to \infty$, we have $\K = 0$ almost everywhere on $\B_m\times(\S\setminus \B)$.  Taking the union in $m$ yields $\K = 0$ almost everywhere on $\B\times (\S \setminus \B)$.

Since $\K$ is irreducible, it follows that $\mu(\B) = 0$ or $\mu(\S\setminus \B) = 0$. As $\B \supseteq \B_{m_0} \supseteq \A_{m_0,j_0}$, we have $\mu(\B) > 0$, so $\mu(\S\setminus \B) = 0$. To finish the proof note that $\B_m \nearrow \B$ so $\mu(\S\setminus \B_m) \to 0$ and we can choose $m$ so that $\mu(\S\setminus \B_m) < \eps$.
\end{proof}

\begin{lemma}\label{lem:density}
Let $N(\mathcal{A}) = \#\{ X_i \in \mathcal{A} \}$ be the number of points in $\A$. Given a finite partition $\P_m$ of $\S$ with high probability for every $i = 1,\dots,M_m$
\[
n \mu(\A_{m,i}) / 2 < N(\A_{m,i}) < 2n \mu(\A_{m,i}).
\]
\end{lemma}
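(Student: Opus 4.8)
The plan is to observe that this is a routine concentration estimate, made easy by the fact that $\P_m$ is a \emph{fixed finite} partition. First note that if $\mu(\A_{m,i}) = 0$ then $N(\A_{m,i}) = 0$ almost surely, so the two-sided bound is only meaningful for the cells in $V_m$, i.e. those with $\mu(\A_{m,i}) > 0$; we restrict attention to those. Fix such an $i$ and write $\mu_i = \mu(\A_{m,i})$. Since $N(\A_{m,i}) = \sum_{\ell=1}^n \ind{[X_\ell \in \A_{m,i}]}$ is a sum of $n$ independent $\mathrm{Bernoulli}(\mu_i)$ variables, it is binomial with mean $n\mu_i$, and the task is to keep it within a constant factor of its mean.

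For the lower tail I would simply reuse the Bernstein-type inequality already quoted in the proof of Lemma \ref{lem:concentration}, applied to $Y = N(\A_{m,i})$ with $Y_\ell = \ind{[X_\ell \in \A_{m,i}]}$, so that $\E Y_\ell^2 = \mu_i$ and $\sum_{\ell} \E Y_\ell^2 = n\mu_i$. Taking $t = \E Y / 2 = n\mu_i / 2$ yields $\Pr{N(\A_{m,i}) \leq n\mu_i/2} \leq e^{-n\mu_i/8}$. For the upper tail a standard Chernoff bound for the binomial distribution gives $\Pr{N(\A_{m,i}) \geq 2n\mu_i} \leq e^{-n\mu_i/3}$ (any absolute constant in the exponent suffices).

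To finish, set $c_m = \min_{i \in V_m} \mu(\A_{m,i})$, which is a strictly positive constant since $V_m$ is finite and each of its cells has positive measure. Then $n\mu_i \geq n c_m$ for every $i \in V_m$, and a union bound over the at most $M_m$ cells of $\P_m$ bounds the probability that the claimed inequality fails for some $i$ by $2 M_m\, e^{-n c_m / 8}$, which tends to $0$ as $n\to\infty$ because $M_m$ and $c_m$ do not depend on $n$. Hence the event holds with high probability.

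There is no genuine obstacle here; the only thing to be careful about is exactly this last point, namely that the partition is fixed and independent of $n$, so that $M_m$ and $c_m$ are constants and the union bound does not interact badly with the concentration estimates. (If one instead wanted a version allowing $\P_m$ to grow with $n$, one would have to track how the cell measures shrink, but that is not needed for the application in Lemma \ref{lem:giant} and the proof of Theorem \ref{thm:main}.)
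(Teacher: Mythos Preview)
Your proposal is correct and follows essentially the same route as the paper: binomial concentration for each cell plus a union bound over the finitely many cells, using that the minimum positive cell mass and the number of cells are constants independent of $n$. The only difference is cosmetic---the paper applies the Chernoff bound in the form $\Pr{\xi \gtrless tnp}\leq e^{-f(t)np}$ with $f(t)=t\log t-t+1$ at $t=1/2$ and $t=2$, whereas you use the Bernstein inequality from Lemma~\ref{lem:concentration} for the lower tail and a generic multiplicative Chernoff bound for the upper tail; the resulting exponents differ only in absolute constants.
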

\begin{proof}
We use the binomial Chernoff bound \cite{Che52, Hoe63, JLR00}: If $\xi\sim \Bin(n,p)$ and $t>0$ then
\[
\min\left( \Pr{\xi \leq tnp}, \Pr{\xi \geq tnp} \Big.\right) \leq e^{-f(t)np},
\]
where we write $f(x)=x\log x-x+1$. For a fixed set $\A_{m,i}$, the number of points $N(\A_{m,i})$ is $\Bin(n,\mu(\A_{m,i}))$. Thus, we have for any $1\leq i \leq M_m$,
\begin{align*}
\Pr{N(\A_{m,i}) \leq n\mu(\A_{m,i})/2\big.} &\leq e^{-f(1/2) n\mu(\A_{m,i})}, \\
\Pr{N(\A_{m,i}) \geq 2n\mu(\A_{m,i})\big.} &\leq e^{-f(2) n\mu(\A_{m,i})}.
\end{align*}
For sets $\A_{m,i}$ of zero measure the result holds almost surely. Let $\alpha = \min\{\mu(\A_{m,i}): i\in V_m\}$ and define the events
\[
D_i = \left\{\frac{1}{2} < \frac{N(\A_{m,i})}{n\mu(\A_{m,i})} < 2 \right\}.
\] 
Since $f(1/2) < f(2)$ we have for all $i=1,\dots,M_m$
\[
\Pr{D_i^c} \leq 2 e^{-f(1/2)\alpha n}.
\]
We can apply a union bound to obtain
\[
\Pr{\bigcup_{i=1}^{M_m} D_i^c} \leq \sum_{i=1}^{M_m} \Pr{D_i^c}
\leq \sum_{i=1}^{M_m} 2 e^{-f(1/2)\alpha n} \leq 2M_me^{-f(1/2)\alpha n} \to 0. \qedhere
\]
\end{proof}

\begin{theorem}
If $\K$ is irreducible, continuous $(\mu\otimes\mu)$-almost everywhere, $\lambda_2\in L^\infty(\S,\mu)$ and $\lam > 1$, then $G(n,\K)$ is connected with high probability.
\end{theorem}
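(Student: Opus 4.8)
The plan is to deduce the theorem from Proposition \ref{prop:compsize} together with a discretization argument. By Proposition \ref{prop:compsize} there is a constant $\delta = \delta(\K) \in (0,1/2)$ so that, with high probability, every connected component of $G(n,\K)$ has more than $\delta n$ vertices; in particular there are fewer than $1/\delta$ components, so if the graph is disconnected then some component $A$ satisfies $\delta n < |A| < (1-\delta)n$ and $A \notcx A^c$. Hence it is enough to show that, with high probability, there is no vertex set $A$ with $\delta n < |A| < (1-\delta)n$ and $A \notcx A^c$. The point of restricting to such balanced cuts is that there are only $2^n$ of them, so a union bound succeeds as soon as the probability of a \emph{fixed} balanced cut is of order $e^{-\Theta(n\log n)}$; the estimates behind Proposition \ref{prop:compsize} provide this only for sets of size $o(n)$, which is exactly why a different idea is needed in the linear range.

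For the discretization, keep $\delta$ as above, put $\eps = \delta/4$, and apply Lemma \ref{lem:giant} to obtain $m$ and a connected component $C_m$ of the partition graph $H_m$ with $\mu(\S \setminus \B_m) < \eps$, where $\B_m = \bigcup_{i \in C_m} \A_{m,i}$. Set $\alpha = \min\{\mu(\A_{m,i}) : i \in V_m\} > 0$ and fix a spanning tree $T$ of $H_m[C_m]$ (a single loop if $|C_m| = 1$). For each edge $(i,j)$ of $H_m$ the kernel $\K_m$ is a \emph{positive} constant $c_{ij} = \inf_{\A_{m,i} \times \A_{m,j}} \K$ on $\A_{m,i} \times \A_{m,j}$, so $\K \geq c_\ast := \min_{(i,j) \in T} c_{ij} > 0$ on $\A_{m,i} \times \A_{m,j}$ for every $(i,j) \in T$. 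By Lemma \ref{lem:density} the event $D$ on which each cell $\A_{m,i}$, $i \in V_m$, contains between $n\mu(\A_{m,i})/2$ and $2n\mu(\A_{m,i})$ of the points $X_1,\dots,X_n$ has probability tending to $1$; on $D$ every cell of $C_m$ carries more than $n\alpha/2$ points, while fewer than $2\eps n = \delta n / 2$ points lie outside $\B_m$. All of $\delta, \eps, m, C_m, \alpha, c_\ast$ are fixed, independent of $n$.

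The crux is a combinatorial claim: on $D$, every set $A$ with $\delta n < |A| < (1-\delta)n$ admits an edge $(i,j) \in T$ with $|A \cap \A_{m,i}| \cdot |A^c \cap \A_{m,j}| \geq \tau^2 n^2$, where $\tau = \min\{\alpha/4,\ \delta/(2|C_m|)\} > 0$. Indeed, on $D$ both $A$ and $A^c$ meet $\B_m$ in more than $\delta n/2$ points, so by pigeonhole over the $|C_m|$ cells of $C_m$ there are $i^\ast, j^\ast \in C_m$ with $|A \cap \A_{m,i^\ast}| \geq \delta n/(2|C_m|)$ and $|A^c \cap \A_{m,j^\ast}| \geq \delta n/(2|C_m|)$. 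Call $i \in C_m$ \emph{red} if $|A \cap \A_{m,i}| \geq \tau n$ and \emph{blue} if $|A^c \cap \A_{m,i}| \geq \tau n$; since on $D$ each cell of $C_m$ has more than $n\alpha/2 \geq 2\tau n$ points, every such cell is red or blue, while $i^\ast$ is red and $j^\ast$ is blue. If some cell is both red and blue, pair it with a $T$-neighbour (or with itself, if $|C_m|=1$), which is red or blue; otherwise each cell of $C_m$ has a single colour and, since $T$ is connected and both colours occur, some edge of $T$ joins a red cell to a blue cell. In every case we obtain $(i,j) \in T$ with $|A \cap \A_{m,i}| \geq \tau n$ and $|A^c \cap \A_{m,j}| \geq \tau n$, proving the claim.

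Granting the claim, fix a balanced cut $A$ and the edge $(i,j) \in T$ it yields. Conditioning on $X_1,\dots,X_n$ and working on $D$, the event $A \notcx A^c$ forces each of the at least $\tau^2 n^2$ potential edges between $A \cap \A_{m,i}$ and $A^c \cap \A_{m,j}$ to be absent, and each of these is present independently with probability at least $\min\{1, c_\ast p_n\} = c_\ast \log n / n$ for $n$ large; hence $\Pr{A \notcx A^c \mid X_1,\dots,X_n} \leq (1 - c_\ast p_n)^{\tau^2 n^2} \leq e^{-c_\ast \tau^2 n \log n}$. Summing over the at most $2^n$ balanced cuts and using $\Pr{D} \to 1$,
\[
\Pr{\exists A:\ \delta n < |A| < (1-\delta)n,\ A \notcx A^c} \leq \Pr{D^c} + 2^n e^{-c_\ast \tau^2 n \log n} \longrightarrow 0,
\]
which together with Proposition \ref{prop:compsize} gives $\Pr{G(n,\K) \text{ is connected}} \to 1$. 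The main obstacle is precisely this linear range of component sizes: the first-moment and Bernstein-type bounds of Lemmas \ref{lem:laplace} and \ref{lem:concentration} carry a factor $\binom{n}{k}$ that is only affordable for $k = o(n)$ (with the threshold $\delta$ of Proposition \ref{prop:compsize} shrinking as $\lam \downarrow 1$ or $\norm{\lambda_2}_\infty \to \infty$), so they cannot by themselves preclude a split into two linear pieces. The discretization circumvents this because a balanced cut must separate a \emph{quadratic} number of forced potential edges — those inside one product of cells where $\K$ is bounded below — whose joint absence has probability $e^{-\Theta(n\log n)}$, small enough to beat the $2^n$ union bound; the delicate point is the red/blue pigeonhole step guaranteeing that such a product of cells always exists.
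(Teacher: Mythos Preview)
Your proof is correct and follows essentially the same route as the paper: reduce to balanced cuts via Proposition~\ref{prop:compsize}, invoke Lemma~\ref{lem:giant} with $\eps=\delta/4$ to get the connected piece $C_m$ of the partition graph, use Lemma~\ref{lem:density} for the density event $D$, find adjacent cells carrying a linear amount of $A$ and of $A^c$ respectively, and beat the $2^n$ union bound with the resulting $e^{-\Theta(n\log n)}$ estimate. The only differences are cosmetic: where the paper fixes a path from $u$ to $v$ in $C_m$ and tracks a $\{0,1\}$-valued majority function $f$ along it to locate the switching edge, you fix a spanning tree and use a red/blue colouring, and you condition on $X_1,\dots,X_n$ rather than on $D$ (which is slightly cleaner, and your constant $\tau=\min\{\alpha/4,\delta/(2|C_m|)\}$ is the right one for the ``majority'' step).
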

\begin{proof}
Assume that the graph is disconnected. Let $A$ be a connected component, by Proposition \ref{prop:compsize} it has size at least $\delta n$ with high probability. Consider the sequence of partitions $\P_m$ given in Lemma \ref{lem:partitions} and the associated partition graph $H_m=(V_m,E_m)$. Let $\eps = \delta/4$ by Lemma \ref{lem:giant} there exists $m>1$ and a connected component $C_m$ in $H_m$ with $\mu(\S\setminus \cup_{i\in C_m} \A_{m,i}) < \eps$. Let us fix such $m$ in the following.

By Lemma \ref{lem:density} the event $D = \cap_{i=1}^{M_m} \{ 1/2 < N(\A_{m,i})/n \mu(\A_{m,i}) < 2 \}$ holds with high probability. On $D$, the number of points in $\S\setminus \cup_{i\in C_m} \A_{m,i}$ is less than $2\eps n = \delta n/2$. Therefore, at least $\delta n /2$ points of $A$ must lie in sets $\A_{m,i}$ for $i\in C_m$. We can argue in the same way for $A^c$. By the pigeonhole principle there is at least $u,v\in C_m$ such that the number of points of $A$ in $\A_{m,u}$ is at least $\delta n / 2 |C_m|$ and the number of points of $A^c$ in $\A_{m,v}$ is at least $\delta n / 2 |C_m|$.

Now define a function $f:C_m\to \{0,1\}$ in the following way: $f(u)=1, f(v)=0$, and for any other vertex $f(i) = 1$ if the majority of points in $\A_{m,i}$ belongs to $A$ and $f(i) = 0$ otherwise. Consider a path $u=i_0,i_1,\dots,i_\ell=v$ between $u$ and $v$ in $C_m$, such a path exists since $C_m$ is connected. Let $q = \min\{1\leq k\leq \ell : f(i_k) = 0\}$ then $f(i_{q-1}) = 1$ and $f(i_q) = 0$.

Let $\alpha = \min \{\mu(\A_{m,i}): i\in V_m\}$, clearly $\alpha > 0$ because $V_m$ is finite. Let $\beta_{i,j} = \inf\{\K(x,y): x\in\A_{m,i}, y\in\A_{m,j} \}$ note that $\beta_{i,j} > 0$ for any edge $(i,j)\in E_m$ of the partition graph $H_m$. Define $\beta = \min\{\beta_{i,j}: (i,j)\in E_m\}$, thus $\beta > 0$ since $E_m$ is finite. Define $U = \{i\in A: X_i\in\A_{m,i_{q-1}} \}$ and $V = \{i\in A^c: X_i\in\A_{m,i_q} \}$. On $D$, we have that $|U|,|V| \geq \gamma n$ where $\gamma = \min\{\alpha/2, \delta / 2 |C_m| \}$. Therefore conditionally on $D$ we have
\begin{align*}
\Pr{A\notcx A^c \mid D} &\leq \Pr{U\notcx V \mid D} \\
&\leq \Ex{\prod_{i\in U}\prod_{j\in V} \big(1-\K(X_i,X_j)p_n\big)_+ \;\bigg\vert\; D} \\
&\leq \Ex{(1-\beta p_n)^{|U||V|} \mid D} \\
&\leq (1-\beta p_n)^{\gamma^2 n^2} \\
&\leq e^{-\beta\gamma^2 n\log n}.
\end{align*}
We can apply this bound to finish the proof. As before, let $N_k$ be the number of components of size $k$. We have
\begin{align*}
\Pr{\sum_{k=\delta n}^{n/2} N_k > 0}
&\leq \Pr{D^c} + \Pr{\sum_{k=\delta n}^{n/2} N_k > 0 \;\bigg\vert\; D} \\
&\leq \Pr{D^c} + \sum_{k=\delta n}^{n/2} \Ex{N_k \mid D} \\
&\leq \Pr{D^c} + \sum_{k=\delta n}^{n/2} \binom{n}{k} \Pr{A\notcx A^c \mid D} \\
&\leq \Pr{D^c} + 2^n \times e^{-\beta\gamma^2 n\log n} \to 0.
\end{align*}
We have proved that with high probability there are no components of any size less than $n/2$. Thus, the graph is connected.
\end{proof}

\section{Discussion}\label{sec:discuss}

When $\lam = 1$ we are in the window of connectivity. In this case the probability that the graph $G(n,\K)$ is connected doesn't go to either $0$ or $1$. For example, if $\K = 1$ then $G(n,\K)$ is just the random graph $G(n,p)$ with $p=\log n/n$. Erd\H{o}s and Renyi \cite{ER59} proved in this case that $\Pr{G(n,\K) \text{ is connected} } \to 1/e$ by showing that isolated vertices are still the main obstruction to obtain connectivity, i.e., with high probability the graph consists solely of a giant component and some isolated vertices and the number of them is asymptotically Poisson distributed.

The following example helps to illustrate that some integrability condition on $\lambda_2$ is necessary to obtain connectivity with high probability.
Let $\S = [0,1]$ and $\mu = m$ be the Lebesgue measure. Consider the following kernel
\[
\K(x,y) = \frac{c}{x}\; \ind{[x/2,x]}(y) + \frac{c}{y}\; \ind{[y/2,y]}(x).
\]
We have that $\lam = c/2$ because
\[
\lambda(x) = \begin{cases}
\frac{c}{2} + c \log 2 & \text{if } x \leq \frac{1}{2}, \\
\frac{c}{2} + c \log \frac{1}{x} & \text{if } x > \frac{1}{2}. \\
\end{cases}
\]
However, the graph $G(n,\K)$ is not connected with positive probability. To see this, consider the disjoint events $E_k = \{X_k < 1/n\} \cap \bigcap_{i\neq k} \{X_i > 2/n\}$. If $E_k$ holds then vertex $k$ is isolated in $G(n,\K)$.
Therefore,
\[
\Pr{\bigcup_{k=1}^n E_k} = \sum_{k=1}^n \Pr{E_k} = \sum_{k=1}^n \frac{1}{n}\left(1-\frac{2}{n}\right)^{n-1} 
= \left(1-\frac{2}{n}\right)^{n-1} \to \frac{1}{e^2}.
\]
This does not contradict Theorem \ref{thm:main} because this kernel has 
\[
\lambda_2(x) = \begin{cases}
\frac{c^2}{x} & \text{if } x \leq \frac{1}{2}, \\
\frac{3c^2}{2x} - c^2 & \text{if } x > \frac{1}{2}, \\
\end{cases}
\]
and thus $\lambda_2 \notin L^1(\S,\mu)$.

\bibliographystyle{amsplain}
\bibliography{inhomogeneous}

\end{document}